\documentclass[a4paper, 11pt]{article}

\usepackage{framed}
\usepackage{amssymb}
\usepackage{latexsym}
\usepackage{amsfonts}
\usepackage{amsthm}
\usepackage{amsbsy}
\usepackage{amsmath}
\usepackage{graphics,graphicx}
\usepackage{multirow,multicol}
\usepackage{bm}
\usepackage{bbm}
\usepackage{color}
\usepackage{float}
\usepackage{tikz}
\usepackage{mathrsfs}
\usepackage{epstopdf}
\usepackage{algorithm}
\usepackage{algorithmicx}
\usepackage{algpseudocode}
\usepackage{subcaption} 
\usepackage{enumitem}
\usepackage[left=2.5cm,right=2.5cm,top=2cm,bottom=2cm]{geometry}

\newtheorem{theorem}{Theorem}
\newtheorem{lemma}{Lemma}

\newtheorem{remark}{Remark}

\newcommand{\mg}{\mathcal{G}}

\newcommand{\bbZ}{\mathbb{Z}}
\newcommand{\bbf}{\boldsymbol{f}}
\newcommand{\bbR}{\mathbb{R}}

\begin{document}

\title{A Globally Conservative Compact Framework for Conservation Laws: Fourth-Order Schemes with Enhanced Resolution and Stability}

\author{
	Weifeng Hou \thanks{School of Mathematics and Computational Science, XiangTan University, Xiangtan, Hunan, China, email: {\tt houwf@smail.xtu.edu.cn}.},~~
	Zhangpeng Sun \thanks{School of Mathematics and Computational Science, XiangTan University, Xiangtan, Hunan, China, email: {\tt zpsun@xtu.edu.cn}.},~~
	Wenqi Yao \thanks{School of Mathematics, South China University of Technology, Guangzhou, Guangdong, China, email: {\tt yaowq@scut.edu.cn}.},~~
	Liupeng Wang \thanks{School of Computational Science and Electronics, Hunan Institute of Engineering, Xiangtan, Hunan, China, email: {\tt wangliuping@163.com}.}
}

\maketitle

\begin{abstract}
The compact finite difference method is a powerful tool for discretizing conservation laws, owing to its inherent flexibility in developing high-resolution and highly stable schemes. 
In this paper, we propose a framework for the design of genuine globally conservative compact finite difference schemes, which addresses a critical requirement in conservation laws.
Within our framework, we rigorously establish that the discrete conservation law maintains strict conservation for flux functions in polynomial spaces with optimal algebraic order, i.e., the discrete scheme achieves an optimal algebraic precision.
Our work advances the existing conservative compact finite difference schemes, which rely on approaches to maintaining global conservation that are fundamentally consistent with the method proposed by Lele [Lele, J. Comput. Phys., 1992]. 
As an application, we propose an algorithm for designing globally conservative fourth-order schemes, aimed at optimizing resolution and asymptotic stability.
Three schemes are generated using the algorithm, with their excellent performance across multiple aspects validated through numerical experiments.
\end{abstract}

\vspace*{4mm}
\noindent {\bf Keywords:} 
{compact finite difference method,  conservation laws, globally conservative schemes, high resolution, algebraic precision.}

\maketitle

\section{Introduction}
The compact finite difference method has found extensive application across a wide range of fields, including turbulence problems in fluid dynamics \cite{LiMing1995compact, Gamet1999compact, Shah2010upwind, TianZhenfu2011higher}, two-phase flow problems \cite{ZhengFeng2021high}, elliptic interface problems \cite{FengQiwei2021sixth,FengQiwei2022high}, option pricing models \cite{Tangman2008numerical,Roul2021compact}, and fractional equations \cite{CuiMingrong2009compact}. These problems typically exhibit multiscale characteristics and intricate details, demanding high-accuracy, high-resolution, and low-dissipative numerical schemes for effective simulation. By achieving high accuracy with fewer stencil points and offering sufficient flexibility to design stable numerical schemes that balance high resolution and low dissipation, the compact finite difference method emerges as an optimal choice for addressing such challenges.

A suitable compact finite difference scheme typically demands comprehensive consideration of truncation error, resolution, and stability \cite{LELE199216,kim1997implementation}. Truncation error governs the convergence order, with specific constraints enforced through the matching of Taylor expansion coefficients. Resolution error, composed of dispersive and dissipative components, arises from the real part (dispersion) and imaginary part (dissipation) of the pseudo-wavenumber, respectively. Central compact difference schemes inherently feature no dissipative error; however, non-central schemes applied at boundaries introduce both errors. Nevertheless, precise adjustments to the real and imaginary parts of the pseudo-wavenumber enable resolution optimization. The stability of a numerical scheme is evaluated by analyzing the eigenvalues of the full discrete operator: a scheme is deemed stable if its eigenvalues lie within the left half of the complex plane. In practice, stability is closely tied to the specific problem under study. For instance, Brady and Livescu developed high-order stable boundary schemes for explicit and compact finite difference schemes using a numerical stability optimization strategy based on the one-dimensional Euler equations \cite{brady2019high}. Wu et al., meanwhile, optimized their scheme by minimizing the approximation error of a test function’s derivative under given grid parameters, while enforcing stability constraints \cite{wu2024towards}; however, this approach sacrificed resolution optimization, as they posited that system resolution is primarily determined by the interior scheme.

For hyperbolic systems, global conservation is of critical importance. 
However, compact difference schemes do not inherently satisfy this property. 
Qin et al. \cite{qin2021role} investigated fifth-order difference schemes, distinguishing between globally conservative and non-globally conservative formulations that differ solely at boundary grid points. 
The globally conservative scheme demonstrated superior performance over its non-conservative counterpart in terms of numerical error reduction and long-term simulation fidelity. 
Lele \cite{LELE199216} proposed weight coefficients for compact difference schemes near boundaries to enforce compliance with global hyperbolic conservation requirements. 
Nevertheless, these weights lack the properties of genuine integration weights, and their approach only ensures that boundary fluxes are contributed exclusively by boundary points without genuine ensuring discrete conservation in the context of numerical integration. 
Prior studies on the development of conservative compact finite difference schemes, such as \cite{brady2019high}, largely adhere to the same logical framework as proposed by Lele in \cite{LELE199216}.
This work aims to advance their methodology.

In this paper, we propose a general framework for developing genuine globally conservative compact finite difference schemes on uniform grids for conservation laws.
Global conservation of the conservation law is established in terms of the integral formulation. When the continuous conservation law is spatially discretized using a finite difference scheme, the integral corresponding to the continuous global conservation condition is also discretized simultaneously, necessitating a set of integration weights.
To address this issue, we introduce a set of auxiliary variables in conjunction with the integration weights and rigorously enforce discrete global conservation within the framework of numerical integration.
The contributions of this work are summarized as follows:
\begin{enumerate}
	\item[(i)] We demonstrate that when both the numerical schemes applied at the two boundaries and their corresponding integration weights are symmetric, the algebraic order of precision achieves optimality.
	\item[(ii)] Three globally fourth-order schemes are developed within the conservative framework through optimization of resolution capability and asymptotic stability.
\end{enumerate}
Finally, we validate the fourth-order convergence and superior stability of the proposed numerical schemes via numerical experiments.

The remainder of this paper is structured as follows.
In {\bf Section \ref{sec:method}}, we present a framework for developing genuinely globally conservative discrete conservation laws, and within this conservative framework, a fourth-order numerical scheme is introduced.
In {\bf Section \ref{optimization}}, we analyze the resolution and asymptotic stability of the proposed schemes. Additionally, an algorithm is proposed to design the numerical schemes via the optimization of resolution and asymptotic stability.
In {\bf Section \ref{numerical}}, numerical experiments are conducted to validate the order and stability of the proposed numerical schemes.
Finally, concluding remarks are presented in {\bf Section \ref{conclusion}}.

\section{Globally conservative compact finite difference framework}\label{sec:method}
\subsection{Preliminary review}
Consider a one-dimensional computational domain defined as $[0,L]$. The domain is discretized using uniformly spaced grid nodes, denoted by $G=\{x_i:i=0,1,…,N\}$, where $0=x_0<x_1<\cdots<x_N=L$ and $h=x_i-x_{i-1}$ represents the uniform grid spacing.
Compact finite difference schemes, employed to discretize the first-order derivative of an arbitrary function $f(x)$, adhere to a unified formulation\cite{wu2024towards}:
\begin{equation}\label{eq:inner_scheme}
	\sum_{m=-M_l}^{M_r}c_mf'_{i+m}=\frac{1}{h}\sum_{n=-N_l,\neq0}^{N_r}d_n\left(f_{i+n}-f_i\right),
\end{equation}
where $f_i$ and $f_i'$ denote the numerical approximations of $f(x)$ and its derivative $\partial f/\partial x$ at $x_i$, respectively. 
The coefficients $c_m$ and $d_n$ are determined by the prescribed accuracy requirement, while $M_l,M_r,N_l$ and $N_r$ characterize the extent of the grid point stencil (i.e., the number of neighboring nodes involved in the discretization).
For central difference schemes, where $M_l = M_r$ and $N_l = N_r$, the coefficients satisfy symmetry conditions, i.e., $c_m=c_{-m}$ and $d_n=-d_n$. 

By equating the coefficients of the Taylor series expansion, the coefficients of a $k$-th order numerical scheme are required to satisfy the following relationship \cite{wu2024towards}:
\begin{equation}\label{eq:order_relationship}
	\sum_{m=-M_l}^{M_r}m^{j-1}c_m=\frac{1}{j}\sum_{n=-N_l,\neq0}^{N_r}n^jd_n, \quad j=1,2,\cdots,k.
\end{equation}
This condition ensures the scheme achieves the desired order of accuracy by enforcing alignment between the discrete approximation and the Taylor series expansion up to the specified truncation order.

We categorize the grid points into two distinct sets: the interior points, denoted by: $\mg_I = \{x_{l_1},\cdots,x_{N-l_2}\}$, where $l_1$ and $l_2$ are two positive integers, and the boundary points: $\mg_b = \mg - \mg_I$(i.e., the complement of $\mg_I$ in $\mg$).
Furthermore, the boundary set $\mg_b$ is partitioned into two subsets: the left boundary $\mg_b^l$ and the right boundary $\mg_b^r$, such that $\mg_b = \mg_b^l\cup \mg_b^r$(with subscripts $l$ and $r$ denoting "left" and "right", respectively).
On the interior points $\mg_I$, central difference schemes are typically applied due to their inherent low-dissipation characteristics. 
In contrast, while on $\mg_b$, central schemes are invalid on boundary points $\mg_b$ because the stencil asymmetry disrupts the symmetry required for central discretization.
For clarity in subsequent discussions, we reformulate the discretization of $\partial_x f $ into a unified framework:
\begin{equation}\label{Compact_scheme_all}
	\left\{\begin{array}{ll}
		\sum_{j=i-M}^{i+M}a_{ij}f'_j=\frac{1}{h}\sum_{j=i-N}^{i+N}b_{ij}f_{j},  & x_i \in \mg_I, \\
		\sum_{j=0}^{j=S_1^l}a_{ij}f'_{j} = \frac{1}{h}\sum_{j=0}^{j=S_2^l}b_{ij}f_{j}, & x_i\in \mg_b^l, \\
		\sum_{j=0}^{j=S_1^r}a_{N-i,N-j}f'_{N-j} = \frac{1}{h}\sum_{j=0}^{j=S_2^r}b_{N-i,N-j}f_{N-j}, & x_{N-i}\in \mg_b^r.
	\end{array}\right.
\end{equation}
The coefficients in this system (\ref{Compact_scheme_all}) are determined by enforcing the order-accuracy condition (\ref{eq:order_relationship}) and the conservation constraints detailed in the following section.
For simplicity in the following discussion, we denote the scheme defined on $x_i\in \mg_I$ as the interior scheme and the scheme defined on $x_i\in \mg_b$ as the boundary scheme.

\subsection{A globally conservative framework}
For hyperbolic systems, globally conservative approximations are often preferred, as they ensure that numerical solutions preserve key conserved quantities. 
Consider a scalar hyperbolic conservation law of the form:
\begin{equation}\label{convection}
	\frac{\partial u}{\partial t} + \frac{\partial f}{\partial x} = 0, \quad x\in[0,L],
\end{equation}
where $f=f(u)$ denotes the flux function. A defining property of solutions to this equation is that the total variation of $u$ over time is governed exclusively by the flux function $f$ at the domain boundaries. This behavior can be formally demonstrated by integrating Eq. \eqref{convection} over the computational domain:
\begin{equation}\label{integral}
	\frac{d}{dt}\int^{L}_{0}u(x,t)dx = f|_{x=0} - f|_{x=L}.
\end{equation}
The integral formulation explicitly shows that the temporal evolution of the total quantity of $u$ depends only on the flux values at the boundaries.

Given a set of quadrature weights $W=[w_0,w_1,\cdots,w_{N}]^\top$ corresponding to the quadrature points of $\mg$, and define the composite numerical integration rule for any integrable function $f$ on the interval $[0,L]$ as
\begin{equation}\label{numerical_integration}
	I_N[f] \triangleq h\sum_{k=0}^N {w_k} f(x_k). 
\end{equation}
It is well established that a carefully designed configuration of $W$ ensures the numerical integration rule Eq. \eqref{numerical_integration} achieves the desired algebraic accuracy.

Based on Eq. \eqref{numerical_integration}, we discretize the global conservation law Eq. \eqref{integral} into the following discrete form:
\begin{equation}\label{discrete_integral}
	\frac{d}{dt}\sum^{i=N}_{i=0}w_i{u_i(t)}h = {f_0(t) - f_N(t)}, 
\end{equation}
where ${u_i(t)}$ and ${f_i(t)}$ denote the approximations of ${u(x_i, t)}$ and ${f(x_i, t)}$, respectively. {For notational convenience, the time variable $t$ is omitted in the following discussion.}

Applying Eq. \eqref{Compact_scheme_all} to discretize the first-order spatial derivative in Eq. \eqref{convection} yields the following matrix form:
\begin{equation}\label{matrix_form_derivative}
	A F' = \frac{1}{h} B F.
\end{equation}
Here, the entries of $A,B\in \bbR^{(N+1)\times(N+1)}$ are retirved from Eq. \eqref{Compact_scheme_all}, with the remaining unused entries in Eq. \eqref{Compact_scheme_all} set to $0$.
Here, $F = [f_0,f_1,\cdots,f_N]^\top$ represents the column vector storing discrete flux values at grid points $\mg$, and $F' = [f_0',f_1',\cdots,f_N']^\top$ denotes the column vector of nodal derivatives of $f$ at the same grid points.
The $(N+1)\times (N+1)$ coefficient matrices $A$ and $B$ are derived from the compact finite difference schemes given by Eq. \eqref{Compact_scheme_all}.

Substituting Eq. \eqref{matrix_form_derivative} into Eq. \eqref{convection} yields the following semi-discrete form:
\begin{equation}\label{semi_discrete}
	\frac{d}{dt}AU+\frac{1}{h}BF=0,
\end{equation}
where $U=[u_0,u_1,...,u_N]^\top$ denotes the column vector of numerical approximations for $u(x_i)$ at grid points $x_i$.

To enforce the discrete global conservation, we introduce an auxiliary vector $W'=[w_0', w_1', \cdots, w_{N}']^\top$ and employ it as the linear combination coefficient vector to sum the rows of the semi-discrete system Eq. \eqref{semi_discrete}, yielding:
\begin{equation}    
	\frac{d}{dt}\sum^{i=N}_{i=0}w'_i[AU]_{i}h=-\sum^{i=N}_{i=0}w'_i[BF]_i,
\end{equation}
where $[\cdot]_i$ denotes the $i$-th component of the vector.  

To restore the discrete conservation law Eq. \eqref{discrete_integral}, we introduce the following conditions to determine $W'$ based on the weight vector $W$: 
\begin{equation}\label{wp_w_conservationlaw}
	\frac{d}{dt}\sum^{i=N}_{i=0}w_i'[AU]_ih = \frac{d}{dt}\sum^{i=N}_{i=0}w_iu_ih, \quad \sum^{i=N}_{i=0}w_i'[BF]_i=-f_0+f_N.
\end{equation}
These conditions can be compactly represented in matrix form as:
\begin{equation}\label{wp_relation_w}
	W' A=W,\quad W' B = [-1,0,\cdots,0,1].
\end{equation}

\begin{remark}
	The conservation law Eq. \eqref{discrete_integral} represents a constraint imposed on the numerical solutions $\{u_i\}_{i=0,1,\cdots,N}$ rather than the numerical scheme itself. 
	Consequently, irrespective of the specific forms of matrices $A$ and $B$, the relation Eq. \eqref{wp_relation_w} restores the discrete conservation law Eq. \eqref{discrete_integral}, thereby ensuring global conservation inherently.
\end{remark}

Naturally, determining the space of functions on which the numerical integration scheme Eq. \eqref{numerical_integration} works accurately, specifically, its algebraic precision, is of critical importance.
As the primary contribution of this work, we demonstrate that the order of algebraic precision of the numerical integration scheme Eq. \eqref{numerical_integration} attains its optimal value. 
This optimality arises from the symmetry of the underlying numerical schemes proposed for $x_l\in \mg_b^l$ and $x_{N-l}\in\mg_b^r$.
The corrsponding result is formally stated in {\bf Theorem \ref{theorem:algebraic_order_general}}. 

\begin{theorem}\label{theorem:algebraic_order_general}
	Denote $T_I^i[f]$, $T_L^i[f]$ and $T_R^i[f]$ as the truncation errors of Eq. \eqref{matrix_form_derivative} on $x_i\in$ $\mg_I$, $\mg_b^l$ and $\mg_b^r$, respectively.
	Let (C1)-(C3) be fulfilled.
	\begin{enumerate}
		\item[(C1)]$l_1 = l_2 = l$, $S_1^l = S_1^r = S_1$, $S_2^l=S_2^r=S_2$. 
		\item[(C2)]$a_{ij} = a_{N-i,N-j}$, $b_{ik} = - b_{N-i,N-k}$, $w_i = w_{N-i}$, for $0 \leq i \leq l-1$, $0 \leq j \leq S_1$,  $0 \leq k \leq S_2$.
		\item[(C3)]$\exists k \in \bbZ$, $T_I^i[f] \sim O(h^{2k})$, $i=l,\cdots, N-l$, and $T^i_L[f] \sim T^i_R[f] \sim O(h^{K})$, $\forall K\geq 2k-1$, $i=0,1,\cdots,l-1$.
	\end{enumerate}
	Then, Eq. \eqref{numerical_integration} with the integration weight vector $W$ satisfy Eq. \eqref{wp_relation_w} posesses $2k-1$-th order algebraic precision.   
\end{theorem}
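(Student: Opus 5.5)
The strategy is to show directly that $I_N[p]=\int_0^L p\,dx$ for every polynomial $p$ of degree at most $2k-1$. Take such a $p$ and set $q(x)=\int_0^x p(s)\,ds$, a polynomial of degree at most $2k$. We apply the scheme \eqref{matrix_form_derivative} to $f=q$, with $F=Q=[q(x_0),\dots,q(x_N)]^\top$ and the exact nodal derivatives $P=[p(x_0),\dots,p(x_N)]^\top$. Write
\[
AP-\tfrac1h BQ=\tau ,
\]
where $\tau$ is the vector of truncation errors. Multiplying on the left by $W'$ and using \eqref{wp_relation_w} gives
\[
h\,WP-(q(L)-q(0))=h\,W'\tau .
\]
Since $q(L)-q(0)=\int_0^L p\,dx$, exactness of the rule for $p$ is equivalent to $W'\tau=0$. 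So the whole proof reduces to controlling $\tau$ for polynomials $q$ of degree at most $2k$.

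On the interior rows we use (C3). The interior scheme has truncation error $O(h^{2k})$, so the Taylor conditions \eqref{eq:order_relationship} hold for $j\le 2k$. This makes the interior scheme exact on polynomials of degree at most $2k$: the local error is a finite combination of $h^{j-1}q^{(j)}$, and these vanish for $j>2k+1$. (Exactness is in fact one degree higher, since for central schemes order $2k$ automatically gives $2k+1$.) Hence $\tau_i=0$ for $l\le i\le N-l$. By (C3), the boundary schemes are exact for degree at most $2k-1$. For a degree-$2k$ polynomial, the only residual therefore comes from the leading monomial $x^{2k}$, with some nonzero coefficient $c$. Concretely, on the left rows $\tau_i=c\,h^{2k-1}\theta_i$, where $\theta_i$ is a constant built from $a_{ij},b_{ij}$ and the moments $\sum_j a_{ij}\,2k\,j^{2k-1}-\sum_j b_{ij}\,j^{2k}$, after expanding about $x_0$.

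The key step is the symmetry. We expand the right-boundary rows about $x_N$, writing the local variable as $s=x-x_N$, so nodes sit at $-jh$. The leading monomial in $s$ contributes $\tau_{N-i}=c\,h^{2k-1}\big(\sum_j a_{N-i,N-j}\,2k(-j)^{2k-1}-\sum_j b_{N-i,N-j}(-j)^{2k}\big)$. Using (C2), $a_{N-i,N-j}=a_{ij}$ and $b_{N-i,N-j}=-b_{ij}$, together with the parity $(-j)^{2k-1}=-j^{2k-1}$ and $(-j)^{2k}=j^{2k}$, this equals $-c\,h^{2k-1}\theta_i$. The lower-order terms of $q$ re-expanded about $x_N$ have degree at most $2k-1$ and are annihilated by exactness. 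Therefore $\tau_{N-i}=-\tau_i$ for $0\le i\le l-1$, and $\tau$ is antisymmetric under $i\mapsto N-i$.

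It remains to show that $W'$ is symmetric, $w'_i=w'_{N-i}$, which then gives $W'\tau=0$. Let $J$ be the reversal permutation. By (C1)–(C2) and the central symmetry of the interior scheme ($c_m=c_{-m}$, $d_n=-d_{-n}$), we have $JAJ=A$ and $JBJ=-B$, and (C2) also gives $WJ=W$. Then $(W'J)A=W'AJ=WJ=W$ and $(W'J)B=-W'BJ=-[-1,0,\dots,0,1]J=[-1,0,\dots,0,1]$. So $W'J$ satisfies \eqref{wp_relation_w} as well. Assuming $A$ is invertible (which is needed for the scheme to be well posed), $W'=WA^{-1}$ is unique, and hence $W'J=W'$.

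The main obstacle I expect is making the boundary residual computation rigorous. One must check that only the top monomial contributes, and that re-centring at $x_N$ does not spoil the antisymmetry; the cleanest route is to write $q$ in powers of $(x-x_N)$ directly for the right rows. A secondary concern is the existence and uniqueness of $W'$, since the system \eqref{wp_relation_w} is overdetermined. Here I would take the existence of $W'$ as a hypothesis of the theorem, implicit in the phrase ``$W$ satisfy \eqref{wp_relation_w}'', and use invertibility of $A$ only to get uniqueness.
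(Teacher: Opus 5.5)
Your proposal is correct and follows the paper's own strategy: write the quadrature defect as a weighted sum of local truncation errors, note that the interior rows are exact on $P_{2k}$ and the boundary rows on $P_{2k-1}$, and cancel the surviving degree-$2k$ boundary residuals pairwise using the mirror symmetry (C2) together with the parity of $(-j)^{2k-1}$ and $(-j)^{2k}$. Your bookkeeping is in fact tighter than the paper's: the residuals carry the weights $W'=WA^{-1}$, not the $w_i$ that appear in the paper's $R_N$, so your extra step deriving $W'J=W'$ from $JAJ=A$, $JBJ=-B$, $WJ=W$ and the invertibility of $A$ is exactly what makes the cancellation legitimate, though it needs the whole weight vector to be symmetric (as in the paper's construction, with unit interior weights and mirrored end weights), which is more than (C2) literally states for $0\le i\le l-1$.
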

\begin{proof}
	Clearly, Eq. \eqref{wp_relation_w} can be rewritten as $WA^{-1}B = [-1,0,\cdots,0,1]$.
	Substituting the above relation in the second equation of Eq. \eqref{wp_w_conservationlaw}, using (C1) and replacing $f_i$ with $f(x_i)$, one notices 
	\[
	hW{\bbf}' = hW \left(\frac{1}{h}A^{-1} B\bbf\right) + R_N  = -f(x_0) + f(x_N) + R_N, 
	\] 
	where \(\bbf' = [f'(x_0), f'(x_1), \dots, f'(x_N)]^\top\) denotes the column vector of the first derivatives of the function \(f\) evaluated at points \(x_0, x_1, \dots, x_N\), \(\bbf = [f(x_0), f(x_1), \dots, f(x_N)]^\top\) denotes the column vector of function values at these points, and 
	\begin{equation}\label{R_N}
		R_N = h \sum_{i=l}^{N-l} w_i T_I^i + h \sum_{i=0}^{l-1} w_i \left( T_L^i + T_R^i \right).
	\end{equation}
	According to (C2)-(C3), and recalling Eq. \eqref{Compact_scheme_all}, one verifies 
	\[
	T_I^i =f^{(2k+1)}(\xi_i) O\left(h^{2k}\right), \quad \xi_i \in (x_{i-1},x_{i+1}),
	\]
	and 
	\begin{equation}\label{T_L}\begin{split}
			T_L^i &= \sum_{k=0}^{S_1} a_{ik} f'(x_k) - \frac{1}{h}\sum_{k=0}^{S_2} b_{ik} f(x_k) \\
			&= h^{K}\left(\sum_{k=0}^{S_1}\frac{(k-i)^{K}}{K!} a_{ik}f^{(K+1)}(\xi_i^k) - \sum_{k=0}^{S_2}\frac{(k-i)^{(K+1)}}{(K+1)!} b_{ik}f^{(K+1)}(\eta_i^k) \right) , \\
			& \quad \quad  \quad \quad \quad  \quad \quad \quad  \quad\text{with}\quad \xi_i^k\in (x_0,x_{S_1}),\quad \eta_i^k \in (x_0,x_{S_2}),
	\end{split}\end{equation}
	\begin{equation}\label{T_R}\begin{split}
			T_R^i &= \sum_{k=0}^{S_1} a_{ik} f'(x_{N-k}) - \frac{1}{h}\sum_{k=0}^{S_2}(- b_{ik}) f(x_{N-k}) \\
			&= (- h)^{K}\left(\sum_{k=0}^{S_1}\frac{(k-i)^{K}}{K!} a_{ik}f^{(K+1)}(\xi_{N-i}^k) - \sum_{k=0}^{S_2}\frac{(k-i)^{(K+1)}}{(K+1)!} b_{ik}f^{(K+1)}(\eta_{N-i}^k) \right), \\
			& \quad \quad \quad  \quad \quad  \quad \quad \quad  \quad \text{with} \quad \xi_i^k\in (x_{N-S_1},x_N),\quad\eta_i^k \in (x_{N-S_2},x_N).
	\end{split}\end{equation}
	For $K>2k-1$, $f^{(2k+1)}(x)$ and $f^{(K+1)}(x)$ both vanish if $f\in P_{2k}(x)$, which implies that the numerical integration formula Eq. \eqref{numerical_integration} achieves $(2k-1)$-th order algebraic precision.
	When $K = 2k-1$, substitute $f = \left(x-\frac{L}{2}\right)^{2k}$ into $T_L^i$ and $T_R^i$. 
	Due to symmtry of the discretization schemes applied to $x_i$ and $x_{N-i}$, $i=0,1,\cdots,l-1$, $\xi_i$ in Eq. \eqref{T_L} and $\xi_{N-i}$ in Eq. \eqref{T_R} are symmetric about $x=\frac{L}{2}$. 
	Similarly, $\eta_i$ in Eq. \eqref{T_L} and $\eta_{N-i}$ in Eq. \eqref{T_R} are symmetric about $x=\frac{L}{2}$, too.
	As a result, $T_l^i[f]$ and $T_R^i[f]$ cancle each other out in Eq. \eqref{R_N}. 
	Thus, it follows that $T_N[f]=0$ for all $f\in P_{2k}$, which implies that the numerical integration scheme Eq. \eqref{numerical_integration} attains $(2k-1)$-th order algebraic precision.
\end{proof}

\begin{remark}
	Regardless of (C1) and (C2), (C3) demonstrates that the lower bound of the algebraic precision order is dictated by the minimum of the consistency orders of the finite difference schemes applied to all grid nodes in $\mg$.
	(C1) and (C2) stipulate a symmetry requirement for the schemes applied to $\mg_b^l$ and $\mg_b^r$, which is critical for maximizing the order of algebraic precision.
\end{remark}

\subsection{Globally fourth-order conservative schemes}\label{sec:Construction}
Henceforth, (C1) and (C2) stated in Theorem \ref{theorem:algebraic_order_general} are assumed to hold naturally.
To gain fourth-order accuracy globally, we adopt the fourth-order tridiagonal compact finite difference scheme for approximating $f'(x_i)$, for $x_i \in \mg_I$:
\begin{equation}\label{eq:4_order_inner_compact_scheme}
	\frac{1}{6}f'_{i-1}+\frac{2}{3}f'_i+\frac{1}{6}f'_{i+1}=\frac{1}{2h}(f_{i+1}-f_{i-1}).
\end{equation}
As pointed out in \cite{gustafsson1975convergence}, the fully disretizated system, involving Eq. \eqref{eq:4_order_inner_compact_scheme} on $\mg_I$, can achieve globally fourth-order accuracy by applying only third-order schemes on $\mg_b$, under certain regularity assumptions. 
Therefore, we let $S=3$.
The coefficients $a_{ij}$ and $b_{ij}$ for the boundary schemes need to be optimized in accordance with the order and stability conditions, the high-resolution condition, and most critically, the global conservation law.
These issues will be addressed in the following sections.

Recall Eq. \eqref{eq:4_order_inner_compact_scheme}: $l\geq 1$ must hold, as otherwise nonexistent grid points outside the boundaries would be required.
Furthermore, $l<4$ is necessary, since violating the second condition of Eq. \eqref{wp_relation_w} would occur otherwise.
This is because when $l\geq 4$, the sum of the entries in at least one column of the matrix B does not vanish, rendering $W'$ unsolvable.

In this paper, we investigate three schemes denoted as P1, P2, and P3, where $l$ is set to $1$, $2$, and $3$, respectively. 
In what follows, we take $l=3$ as an example to illustrate the method for constructing the boundary schemes.

Specifically, when $l=3$ 
\begin{equation}\label{matrix_A}
	A=\begin{bmatrix}
		a_{00}&a_{01}&a_{02}&a_{03}&\cdots&0&0&0&0\\
		a_{10}&a_{11}&a_{12}&a_{13}&0&\cdots&0&0&0\\
		a_{20}&a_{21}&a_{22}&a_{23}&0&0&\cdots&0&0\\
		0&0&\frac{1}{6}&\frac{2}{3}&\frac{1}{6}&0&0&\cdots&0\\
		\vdots&\ddots&\ddots&\ddots&\ddots&\ddots&\ddots&\ddots&\vdots\\
		0&\cdots&0&0&\frac{1}{6}&\frac{2}{3}&\frac{1}{6}&0&0\\
		0&0&\cdots&0&0&a_{23}&a_{22}&a_{21}&a_{20}\\
		0&0&0&\cdots&0&a_{13}&a_{12}&a_{11}&a_{10}\\
		0&0&0&0&\cdots&a_{03}&a_{02}&a_{01}&a_{00}
	\end{bmatrix},
\end{equation}
and
\begin{equation}\label{matrix_B}
	B=\begin{bmatrix}
		b_{00}&b_{01}&b_{02}&b_{03}&\cdots&0&0&0&0\\
		b_{10}&b_{11}&b_{12}&b_{13}&0&\cdots&0&0&0\\
		b_{20}&b_{21}&b_{22}&b_{23}&0&0&\cdots&0&0\\
		0&0&-\frac{1}{2}&0&\frac{1}{2}&0&0&\cdots&0\\
		\vdots&\ddots&\ddots&\ddots&\ddots&\ddots&\ddots&\ddots&\vdots\\
		0&\cdots&0&0&-\frac{1}{2}&0&\frac{1}{2}&0&0\\
		0&0&\cdots&0&0&-b_{23}&-b_{22}&-b_{21}&-b_{20}\\
		0&0&0&\cdots&0&-b_{13}&-b_{12}&-b_{11}&-b_{10}\\
		0&0&0&0&\cdots&-b_{03}&-b_{02}&-b_{01}&-b_{00}
	\end{bmatrix}.
\end{equation}
Matrices for P1 and P2 can be found in the Appendix section.
By matching the Taylor expansion series of boundary schemes up to the order of $O(h^2)$, one sees that 
\begin{equation}\label{taylor:i=0}
	x_0 : \quad 
	\left\{
	\begin{aligned}
		0 & = b_{00} +b_{01} + b_{02} +b_{03},\\
		a_{00}+a_{01}+a_{02}+a_{03} & = b_{01}+2b_{02}+3b_{03},\\
		a_{01}+2a_{02}+3a_{03} & = \frac{1}{2}b_{01}+2b_{02}+\frac{9}{2}b_{03},\\
		\frac{1}{2}a_{01}+2a_{02}+\frac{9}{2}a_{03} & = \frac{1}{6}b_{01}+\frac{4}{3}b_{02}+\frac{9}{2}b_{03},
	\end{aligned}			
	\right.
\end{equation}
\begin{equation}\label{taylor:i=1}
	x_1 : \quad 
	\left\{
	\begin{aligned}
		0 & = b_{10} +b_{11} + b_{12} +b_{13},\\
		a_{10}+a_{11}+a_{12}+a_{13} & = -b_{10}+b_{12}+2b_{13},\\
		-a_{10}+a_{12}+2a_{13} & = \frac{1}{2}b_{10}+\frac{1}{2}b_{12}+2b_{13},\\
		\frac{1}{2}a_{10}+ \frac{1}{2}a_{12}+2a_{13} & = -\frac{1}{6}b_{10}+\frac{1}{6}b_{12}+\frac{4}{3}b_{13},
	\end{aligned}			
	\right.
\end{equation}
\begin{equation}\label{taylor:i=2}
	x_2 : \quad 
	\left\{
	\begin{aligned}
		0 & = b_{20} +b_{21} + b_{22} +b_{23},\\
		a_{20}+a_{21}+a_{22}+a_{23} & = -2b_{20}-b_{21}+b_{23},\\
		-2a_{20}-a_{21}+a_{23} & = 2b_{20}+\frac{1}{2}b_{21}+\frac{1}{2}b_{23},\\
		2a_{20}+\frac{1}{2}a_{21}+\frac{1}{2}a_{23} & = -\frac{4}{3}b_{20}-\frac{1}{6}b_{21}+\frac{1}{6}b_{23}.
	\end{aligned}			
	\right.
\end{equation}
In addition, according to Eq. \eqref{wp_relation_w}, 
\begin{equation}\label{bsh1}
	W' A=W  \quad \Longrightarrow \quad
	\left\{
	\begin{aligned}
		w'_0a_{00}+w'_1a_{10}+w'_2a_{20}&=w_0,\\
		w'_0a_{01}+w'_1a_{11}+w'_2a_{21}&=w_1,\\
		w'_0a_{02}+w'_1a_{12}+w'_2a_{22}&=w_2-\frac{1}{6},\\
		w'_0a_{03}+w'_1a_{13}+w'_2a_{23}&=w_3-\frac{5}{6},
	\end{aligned}
	\right.
\end{equation}
and
\begin{equation}\label{bsh2}
	W' B = [-1,0,\cdots,0,1] \quad  \Longrightarrow \quad
	\left\{
	\begin{aligned}
		w'_0b_{00}+w'_1b_{10}+w'_2b_{20}&=-1,\\
		w'_0b_{01}+w'_1b_{11}+w'_2b_{21}&= 0,\\
		w'_0b_{02}+w'_1b_{12}+w'_2b_{22}&=\frac{1}{2},\\
		w'_0b_{03}+w'_1b_{13}+w'_2b_{23}&=\frac{1}{2},
	\end{aligned}
	\right.
\end{equation}
must be satisfied simultaneously alongside Eq. \eqref{taylor:i=0}-Eq. \eqref{taylor:i=2} to ensure global conservation.

The other weights can be set as $w_i' = 1,\, i=3,\cdots, N-3$ and $w_i = 1,\, i=4,\cdots, N-4$. 
The auxiliary setting for $w$ is natural because any numerical integration formula should be at least accurate for constant integrands.
Then $w'$ is set accordingly, as the sum of entries of $i$th column of $A$ equals $1$, $4\leq i\leq N - 4$.

\begin{remark}
	The necessary conditions for the solvability of P1, P2, and P3 are proven to be identical.
	The detailed calculation and proof process is presented in Section \ref{app:necessary} of the Appendix.
\end{remark}

			\section{Analysis and optimization}\label{optimization}
			\subsection{Fourier analysis}
			In this section, we analyze the resolution, i.e., the dissipation and dispersion, of the schemes applied to $\mg_b$.
			To facilitate analysis, we use finite wave numbers and define the Fourier expansion of a function $f(x) 
			\in L^1[0,L]$ as 
			\begin{equation}\label{fourier}
				f(x)
				=\sum_{k=-N/2}^{N/2-1}\hat{f}_k\exp\left(\mathrm{i}\omega_k x\right),
			\end{equation}
			where $\mathrm{i}=\sqrt{-1}$, $k$ denotes the wavenumber, $\omega_k=2\pi k/L$ is the frequency.
			Taking the first-order derivative of Eq. \eqref{fourier}, we obtain the exact expansion coefficients of the derivative function.
			To quantify the numerical error in the frequency domain, we substitute Eq. \eqref{fourier} into the boundary discretization schemes, leading to
			\begin{equation}\label{bar-omega}
				\mathrm{i}\bar{\omega}_k= \frac{A(\omega_k)+\mathrm{i}B(\omega_k)}{C(\omega_k)+\mathrm{i}D(\omega_k)},
			\end{equation}
			where $\bar{\omega}_k$ denotes the numerical frequency. 
			Owing to the decoupling of different frequencies in Eq. \eqref{bar-omega}, we omit the subscript $k$ in the following analysis.
			Specifically, $A(\omega)$,$B(\omega)$,$C(\omega)$,$D(\omega)$ are defined according to the underlying scheme.
			For example, 
			\begin{equation}
				x_0:\begin{cases}
					A(\omega)&= b_{00}+b_{01}\cos(\omega)+b_{02}\cos(2\omega)+b_{03}\cos(3\omega),\\
					B(\omega)&= b_{01}\sin(\omega)+b_{02}\sin(2\omega)+b_{03}\sin(3\omega),\\
					C(\omega)&= a_{00}+a_{01}\cos(\omega)+a_{02}\cos(2\omega)+a_{03}\cos(3\omega),\\
					D(\omega)&= a_{01}\sin(\omega)+a_{02}\sin(2\omega)+a_{03}\sin(3\omega),
				\end{cases}
			\end{equation}
			\begin{equation}
				x_1:\begin{cases}
					A(\omega) &= b_{10}\cos(\omega)+b_{11}+b_{12}\cos(\omega)+b_{13}\cos(2\omega),\\
					B(\omega) &= -b_{10}\sin(\omega)+b_{12}\sin(\omega)+b_{13}\sin(2\omega),\\
					C(\omega) &= a_{10}\cos(\omega)+a_{11}+a_{12}\cos(\omega)+a_{13}\cos(2\omega),\\
					D(\omega) &= -a_{10}\sin(\omega)+a_{12}\sin(\omega)+a_{13}\sin(2\omega),
				\end{cases}
			\end{equation}
			\begin{equation}
				x_2:\begin{cases}
					A(\omega) &= b_{20}\cos(2\omega)+b_{21}\cos(\omega)+b_{22}+b_{23}\cos(\omega),\\
					B(\omega) &= -b_{20}\sin(2\omega)-b_{21}\sin(\omega)+b_{23}\sin(\omega),\\
					C(\omega) &= a_{20}\cos(2\omega)+a_{21}\cos(\omega)+a_{22}+a_{23}\cos(\omega),\\
					D(\omega) &= -a_{20}\sin(2\omega)-a_{21}\sin(\omega)+a_{23}\sin(\omega).
				\end{cases}
			\end{equation}

			\subsection{Optimization}\label{sec:Optimization}
			In this section, we introduce an optimization model for determining coefficients in the construction of boundary schemes.
			The optimization model will account for both maximizing resolution and ensuring asymptotic stability.
			
			Let \(\text{Re}(a)\) and \(\text{Im}(a)\) denote the real and imaginary parts of the complex number $a$, respectively.
			Define 
			\begin{equation}
				\varepsilon_{R}(\omega) = \left|\frac{\text{Re}[\bar{\omega}(\omega)] - \omega}{\omega}\right|, \quad \varepsilon_{I}(\omega) = \left|\frac{\text{Im}[\bar{\omega}(\omega)]}{\omega}\right|, 
			\end{equation}
			where $\varepsilon_{R}(\omega)$ denotes the dispersive error while $\varepsilon_{I}(\omega) $ denotes the dissipative error. 
			We define the critical frequency as
			\begin{equation}
				\omega^\sigma = \frac{1}{2}(\omega_{R}^\sigma + \omega_{I}^\sigma),
			\end{equation}
			where
			\begin{equation}
				\omega_{R}^\sigma = \min\{\omega \mid \varepsilon_{R}(\omega) = \sigma, 0 < \omega < \pi\}, \quad \omega_{I}^\sigma = \min\{\omega \mid \varepsilon_{I}(\omega) = \sigma, 0 < \omega < \pi\}.
			\end{equation}
			It is not hard to verify that \(\omega^\sigma\) is the critical point below which the error is consistently low(less than $\sigma$).
			Noticing that both $\varepsilon_{R}(\omega)$ and $\varepsilon_{I}(\omega)$ might grow unbounded when the frequency increases beyond the value.
			
			Since the coefficients in the boundary schemes are interrelated, it is imperative to collectively consider the "average" effect of optimizing the schemes applied at $\mg_b$, rather than optimizing each one in isolation.
			For each $x_i \in \mg_b^l$, we denote $\omega_{i}^{\sigma_i}$ as the correponding critical wave frequency, and define the average resolution as
			\begin{equation}\label{omega_f}
				\omega_f = \frac{1}{3}(\omega_{0}^{\sigma_0}+\omega_{1}^{\sigma_1}+\omega_{2}^{\sigma_2}).
			\end{equation}
			where the value of \(\sigma_i\) is retrieved from \cite{kim2007optimised} with \(\sigma_0 = 0.003\), \(\sigma_1 = 0.002\), and \(\sigma_2 = 0.001\).
			
			%
				To define the optimization problem, we consider the aymptotic stability of Eq. \eqref{semi_discrete}.  
				Consider the equivalent IBVP,
				\begin{equation}\label{IBVP}\begin{split}
						& \frac{\partial u}{\partial t} + \frac{\partial u}{\partial x} = 0, \quad x \in [0, 1], \quad t \geq 0,  \\
						&u(x,0) = u_0(x), \quad u(0,t)=0.
				\end{split}\end{equation}
				Applying Eq. \eqref{matrix_form_derivative} to discretize Eq. \eqref{IBVP}, and using the boundary condition, one obtains 
				\begin{equation}\label{asymptotic_system}
					\frac{d\hat{U}}{dt} = Q\hat{U},
				\end{equation}
				where $Q\in \bbR^{N\times N}$ is the submatrix obtained by eliminating the first row and column of the original coefficient matrix. 
			
			According to \cite{strang2000linear}, the system Eq. \eqref{asymptotic_system} is asymptotically stable if and only if
			\begin{equation}
				\text{Re}(\lambda) \leq 0, \quad \forall \lambda\in \Lambda[Q],
			\end{equation}
			where $\Lambda[Q]$ denotes the set of eigenvalues of $Q$.
			In order to obtain the optimal boundary schemes, we define the optimization problem
			\begin{equation}\label{youhua}
				\begin{aligned}
					\text{maximize} \quad & \omega_f, \\
					\text{subject to} \quad & \max_{\lambda} \text{Re}(\lambda) \leq 0, \\
					& w_i > 0, \quad i = 1, 2, 3, \\
					& w_0 \in (0, 10), \quad a_{03}, b_{03}, a_{13}, b_{13}, w'_0 \in (-10, 10).
				\end{aligned}
			\end{equation}
			{For detailed parameter dependency relationships, see formulas Eq. \eqref{P3_w1p_w2p} and Eq. \eqref{P3_relationship} in Section \ref{app:dependency} of the Appendix.
				The algorithm for the construction of $\omega_f$ for P1, P2 and P3 is designed and stated in Section \ref{app:algorithm} of the Appendix.
				To conduct the optimization, we utilize the global optimization algorithm(scipy.optimize.differential\_evolution) developed in  \cite{storn1997differential} to solve Eq. \eqref{youhua}, for its ability of handling nonlinear and non-convex optimization problems efficiently. }
			
			We conduct experiments to determine $A_i$, $B_i$, $W$ and $W'$ for P1, P2 and P3.
			The number of grid points for optimization is set to $100$.
			The corresponding results are systematically presented in Tables \ref{tab:P1}--\ref{tab:P3}.
			In Figure \ref{fig:eigenvalues}, we also show the real and imaginary parts of each eigenvalue in the set $\Lambda[Q]$, for $N = 50, 100, 200$. 
			Obviously, the real part of all eigenvalues is strictly negative, thereby confirming the numerical stability of the schemes.
			\begin{table*}[!t]
				\caption{\label{tab:P1}Coefficients and weights for P1.}
				\resizebox{\textwidth}{!}{
					
					\centering
					\begin{tabular}{cccc}
						\hline
						$w_0=0.365512831337005295040$ & $w_1=1.19512817265565063352$ & $w_2=0.92987182734434925546$ & $w_3=1.00948716866299470496$\\
						$b_{00}=-2.51450663294882081900$ & $b_{01}=2.51450663294882081900$ & $b_{02}=0$ & $b_{03}=0$\\
						$a_{00}=1$ & $a_{01}=1.819471046485240606224$ & $a_{02}=-0.35267551059842805472$ &$a_{03}=0.04771109706200871159$ \\
						$w_0'=0.19884616467033863763$ & - & - & - \\
						\hline
					\end{tabular}
				}
			\end{table*}
			\begin{table*}[!t]
				\caption{\label{tab:P2}Coefficients and weights for P2.}
				\resizebox{\textwidth}{!}{
					\centering
					\begin{tabular}{cccc}
						\hline
						$w_0=0.35520684553103798464$ & $w_1=1.22604613007355256471$ & $w_2=0.89895386992644732427$ & $w_3=1.01979315446896201536$\\
						$b_{00}=0$ & $b_{01}=26.03939124025922779992$ & $b_{02}=-16.20452306655850804873$ & $b_{03}=-9.83486817370072152755$\\
						$a_{00}=1$ & $a_{01}=-13.03017400229961886282$ & $a_{02}=-20.91617294263996740256$ &$a_{03}=-2.92791246902036483846$ \\
						$b_{10}=-0.76360320980590068451$ & $b_{11}=0$ & $b_{12}=0.61939982125193304707$ & $b_{13}=0.14420338855396769295$\\
						$a_{10}=0.25657462461142366283$ & $a_{11}=1$ & $a_{12}=0.35679065966631573481$ &$a_{13}=0.05804452388802976148$ \\
						$w_0'=0.01920167777297939610$ & $w_1'= 1.30958066592489652535$ & - & - \\
						\hline
					\end{tabular}
				}
			\end{table*}
			\begin{table*}[!t]
				\caption{\label{tab:P3}Coefficients and weights for P3.}
				\resizebox{\textwidth}{!}{
					\centering
					\begin{tabular}{cccc}
						\hline
						$w_0=0.26663842939298731949$ & $w_1=1.49175137848770456017$ & $w_2=0.63324862151229532881$ & $w_3=1.10836157060701268051$\\
						$b_{00}=0$ & $b_{01}=27.88752780480513493444$ & $b_{02}=-19.15566094564050914073$ & $b_{03}=-8.73186685916462579371$\\
						$a_{00}=1$ & $a_{01}=-13.89214768040508829472$ & $a_{02}=
						-21.41292977597984048543$ &$a_{03}=-2.31431720758483283618$ \\
						$b_{10}=-2.41737032042215105321$ & $b_{11}=0$ & $b_{12}=-6.50993055450477520196$ & $b_{13}=8.92730087492692625517$\\
						$a_{10}=1.29886300269147980657$ & $a_{11}=1$ & $a_{12}=8.23654271107762525617$ &$a_{13}=3.22663580200212241067$ \\
						$b_{20}=-0.16112692262471739468$ & $b_{21}=-0.60739819861958466163$ & $b_{22}=0$ & $b_{23}=0.76852512124430449880$\\
						$a_{20}=0.06337857839129412696$ & $a_{21}=0.37486590693825938558$ & $a_{22}=1$ &$a_{23}=0.25993267978377243566$ \\
						$w_0'=4.16532467660117156072$ & $w_1'= -12.33339535057290881070$ & $w_2' = 191.24292432666243257700$ & - \\
						\hline
					\end{tabular}
				}
			\end{table*}

			Finally, we demonstrate the resolution of P1, P2, and P3 in Figures \ref{fig:resolution-real}--\ref{fig:resolution-imag}.
			These figures further illustrate the resolution characteristics by comparing the pseudo-wavenumber components \(\text{Re}[\bar{\omega}(\omega)]\) and \(\text{Im}[\bar{\omega}(\omega)]\) for schemes \(P1\), \(P2\), and \(P3\) at the three nodes neighboring the boundary. 
			For benchmark comparison, we include scheme \(T4\) from \cite{brady2019high}, which employs boundary schemes at nodes \(i=0,1\) and  the interior scheme at node \(i=2\); this configuration inherently produces a zero imaginary component (\(\text{Im}[\bar{\omega}] = 0\)) in its resolution characteristics. 
			The total resolution \(\omega_f\) for the optimized schemes \(P1\), \(P2\), and \(P3\) is approximately \(0.9268\), \(0.9425\), and \(0.9737\), respectively. 
			In comparison, the total resolution in \cite{brady2019high} is approximately \(0.3927\). 
			This demonstrates the improved performance of the optimized boundary schemes.
			

			\begin{figure}[!t]
				\centering
				\begin{subfigure}{.32\textwidth}
					\centering
					\includegraphics[width=\textwidth]{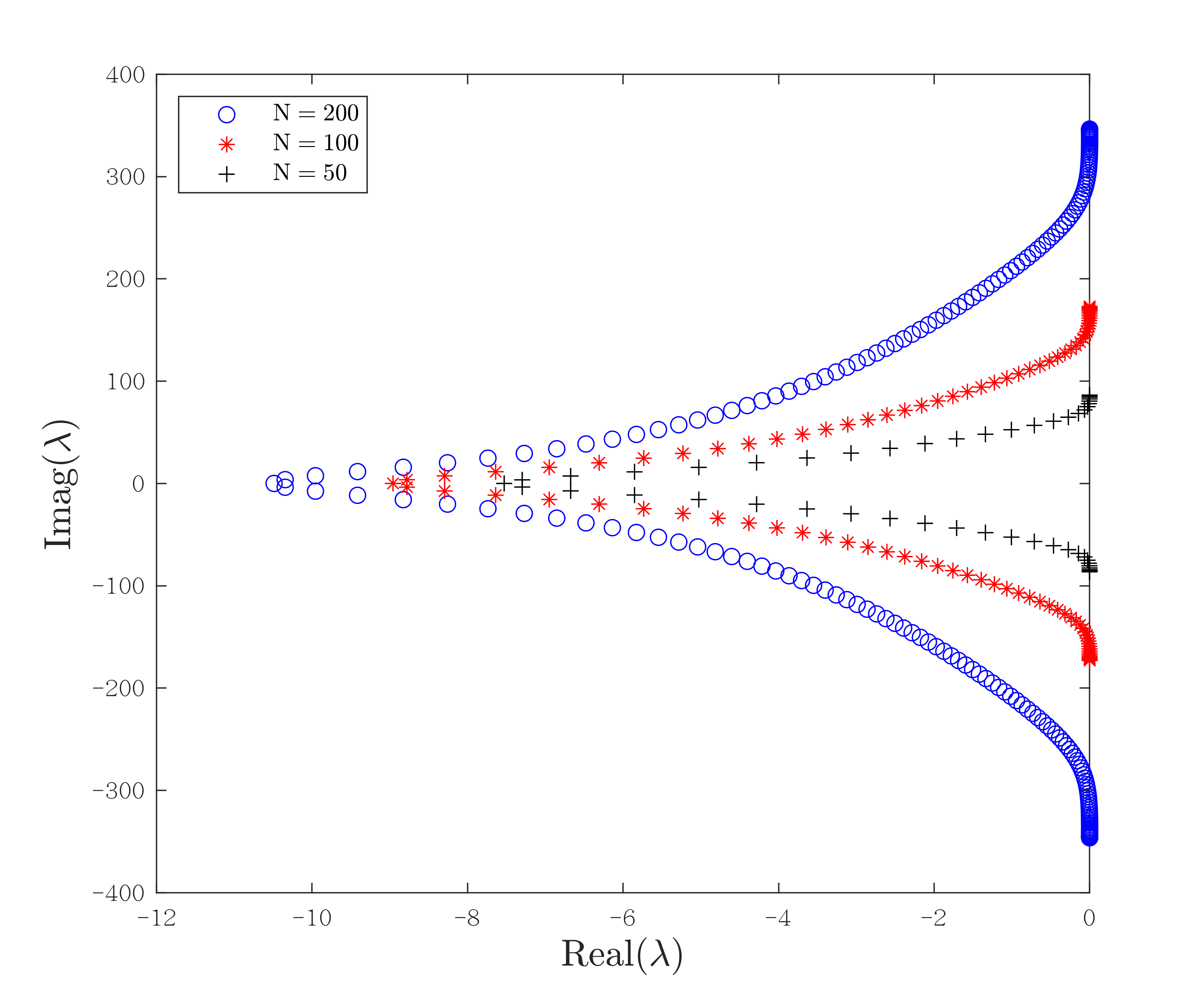}
					\caption{$P1$} 
				\end{subfigure}%
				\begin{subfigure}{.32\textwidth}
					\centering
					\includegraphics[width=\linewidth]{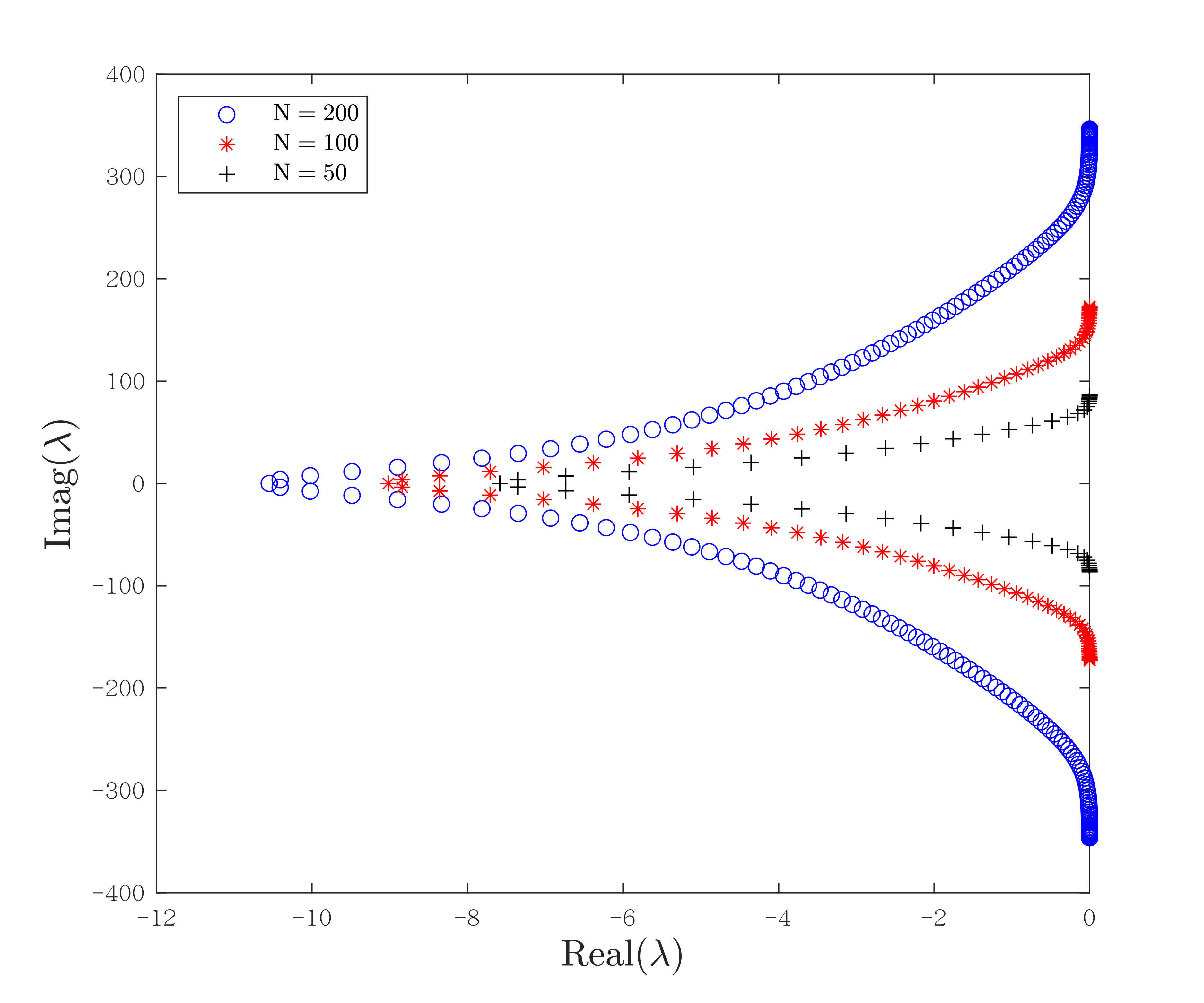}
					\caption{$P2$} 
				\end{subfigure}%
				\begin{subfigure}{.32\textwidth}
					\centering
					\includegraphics[width=\linewidth]{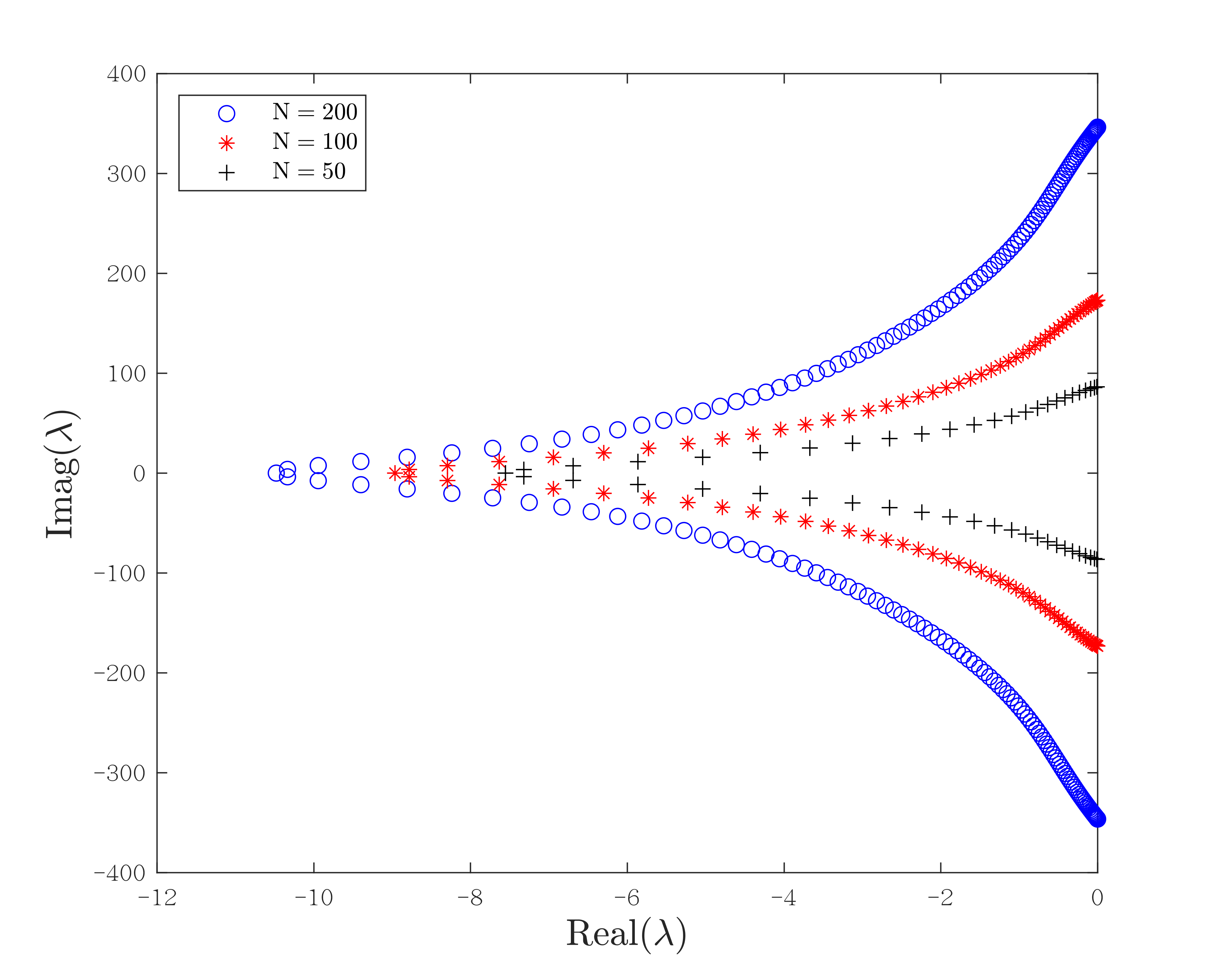}
					\caption{$P3$} 
				\end{subfigure}
				\caption{Distribution of eigenvalues of $Q_{P1}$, $Q_{P2}$ and $Q_{P3}$.}
				\label{fig:eigenvalues}
			\end{figure}
			
			\begin{figure}[!t]
				\begin{subfigure}{0.32\textwidth}
					\centering
					\includegraphics[width=\linewidth]{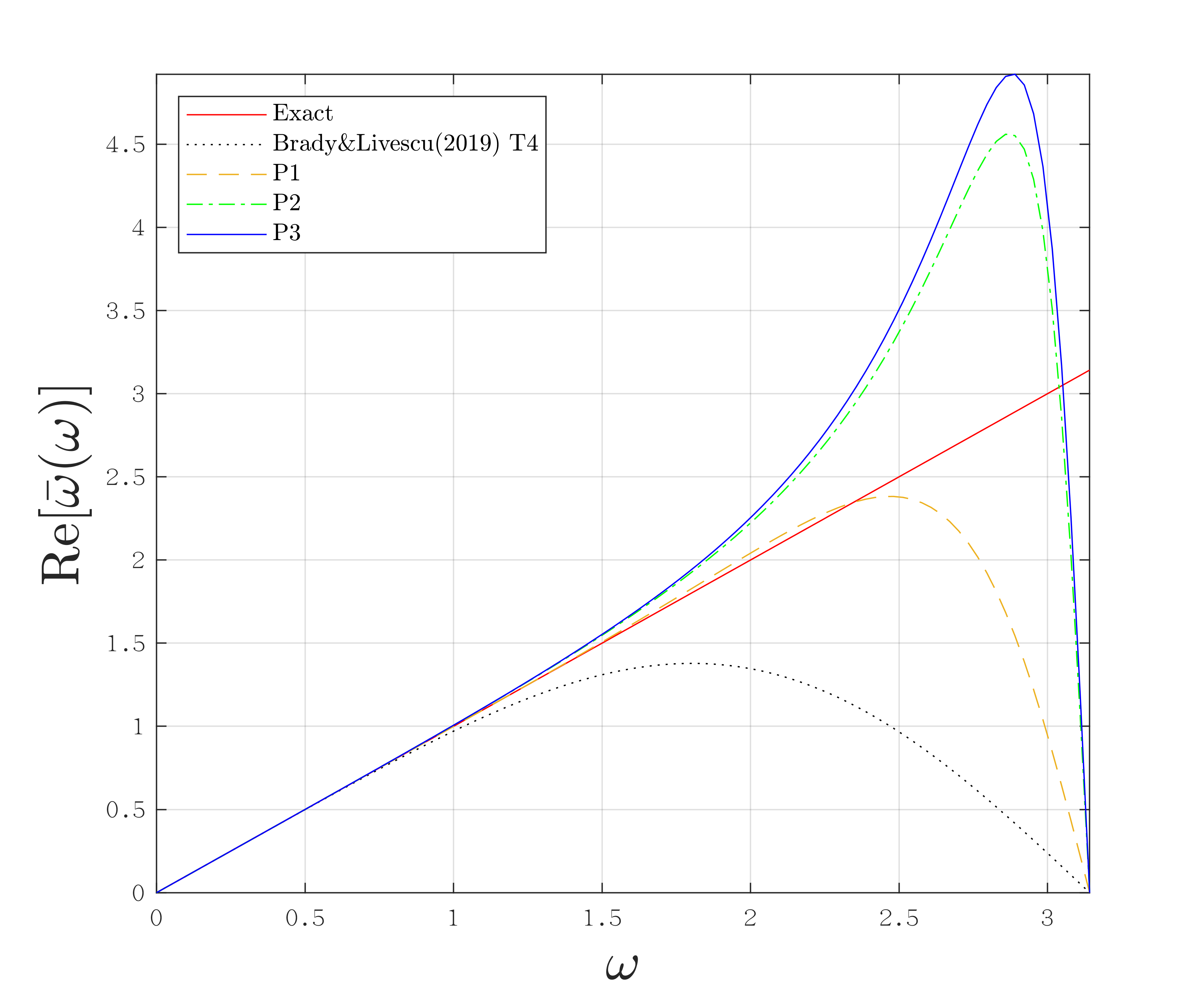}
					\caption{ $i=0$}
					\label{fig:resolution-real-i0}
				\end{subfigure}
				\begin{subfigure}{0.32\textwidth}
					\centering
					\includegraphics[width=\linewidth]{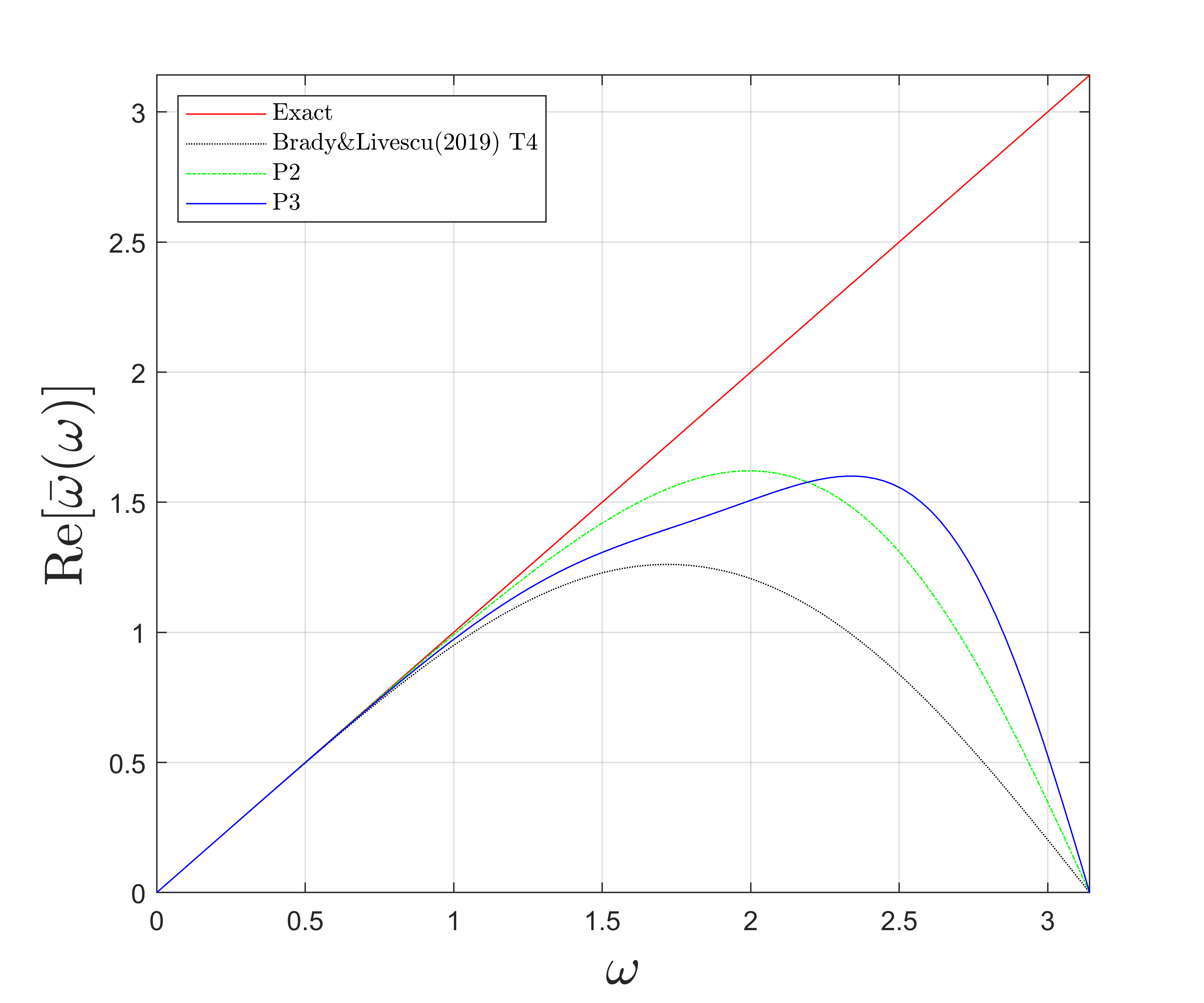}
					\caption{$i=1$}
					\label{fig:resolution-real-i1}
				\end{subfigure}
				\begin{subfigure}{0.32\textwidth}
					\centering
					\includegraphics[width=\linewidth]{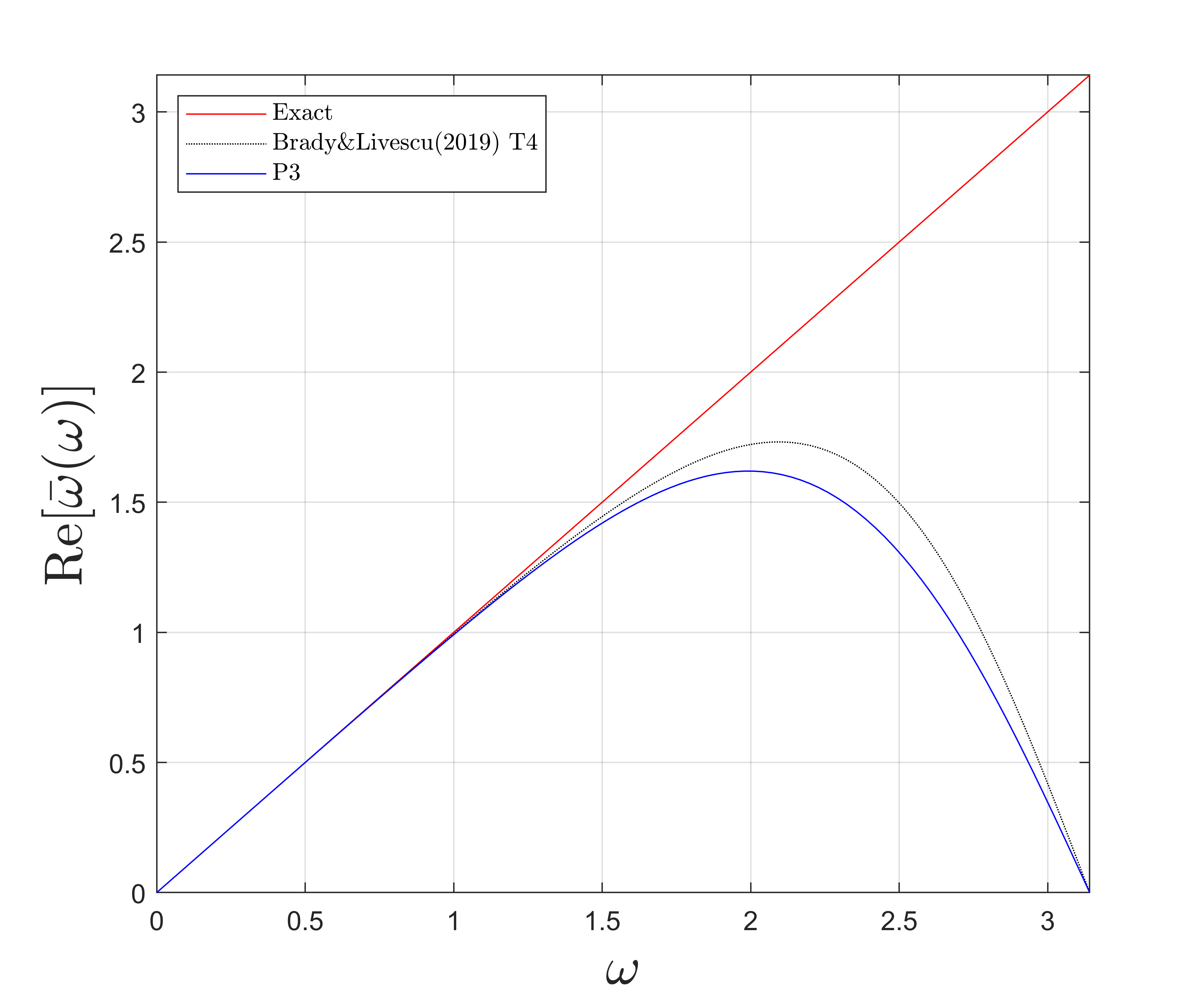}
					\caption{ $i=2$}
					\label{fig:resolution-real-i2}
				\end{subfigure}
				\caption{The real parts of pseudo-wavenumber at nodes \(i=0,1,2\)}
				\label{fig:resolution-real}
			\end{figure}
			
			\begin{figure}[!t]
				\begin{subfigure}{0.32\textwidth}
					\centering
					\includegraphics[width=\linewidth]{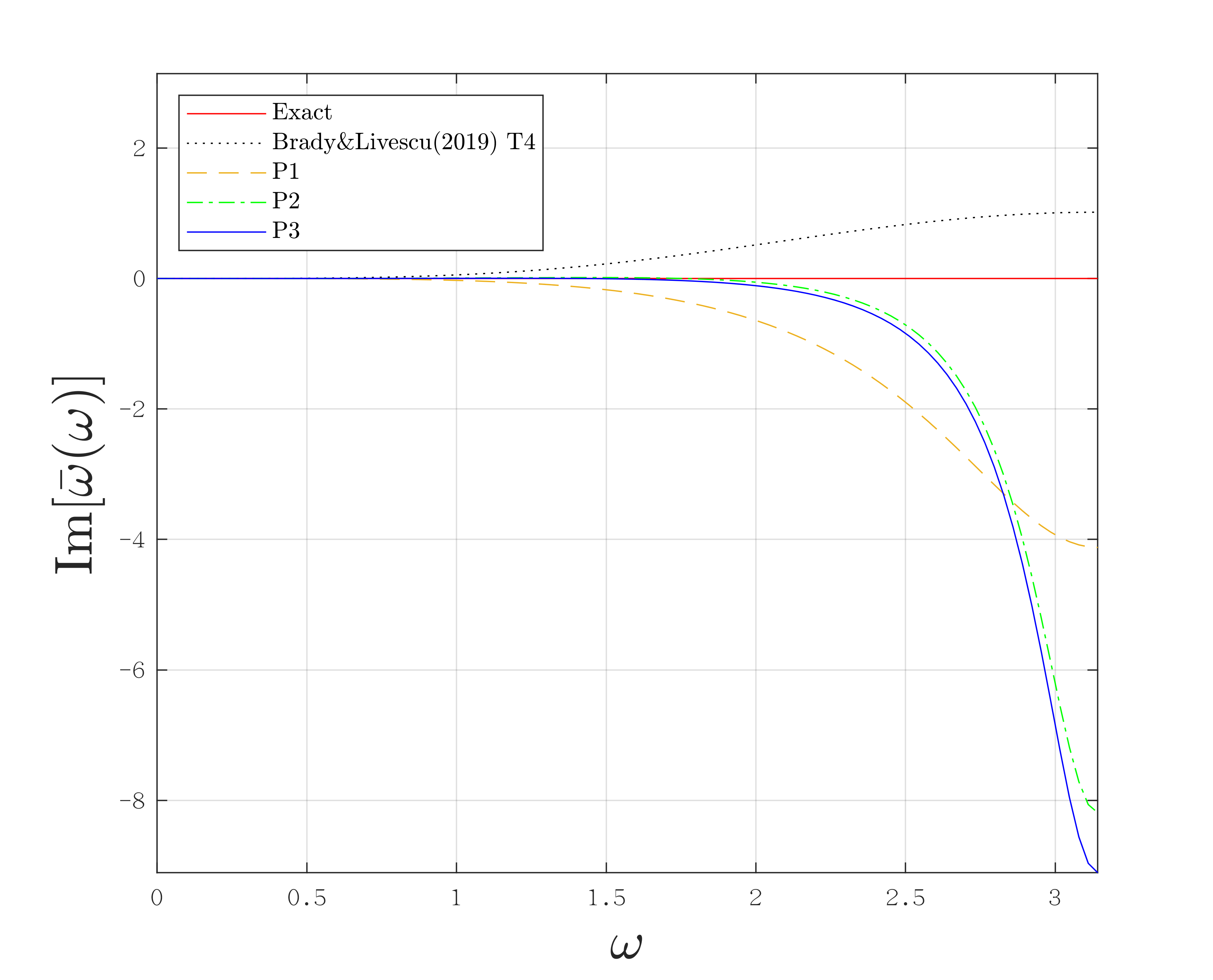}
					\caption{$i=0$}
					\label{fig:resolution-imag-i0}
				\end{subfigure}
				\begin{subfigure}{0.32\textwidth}
					\centering
					\includegraphics[width=\linewidth]{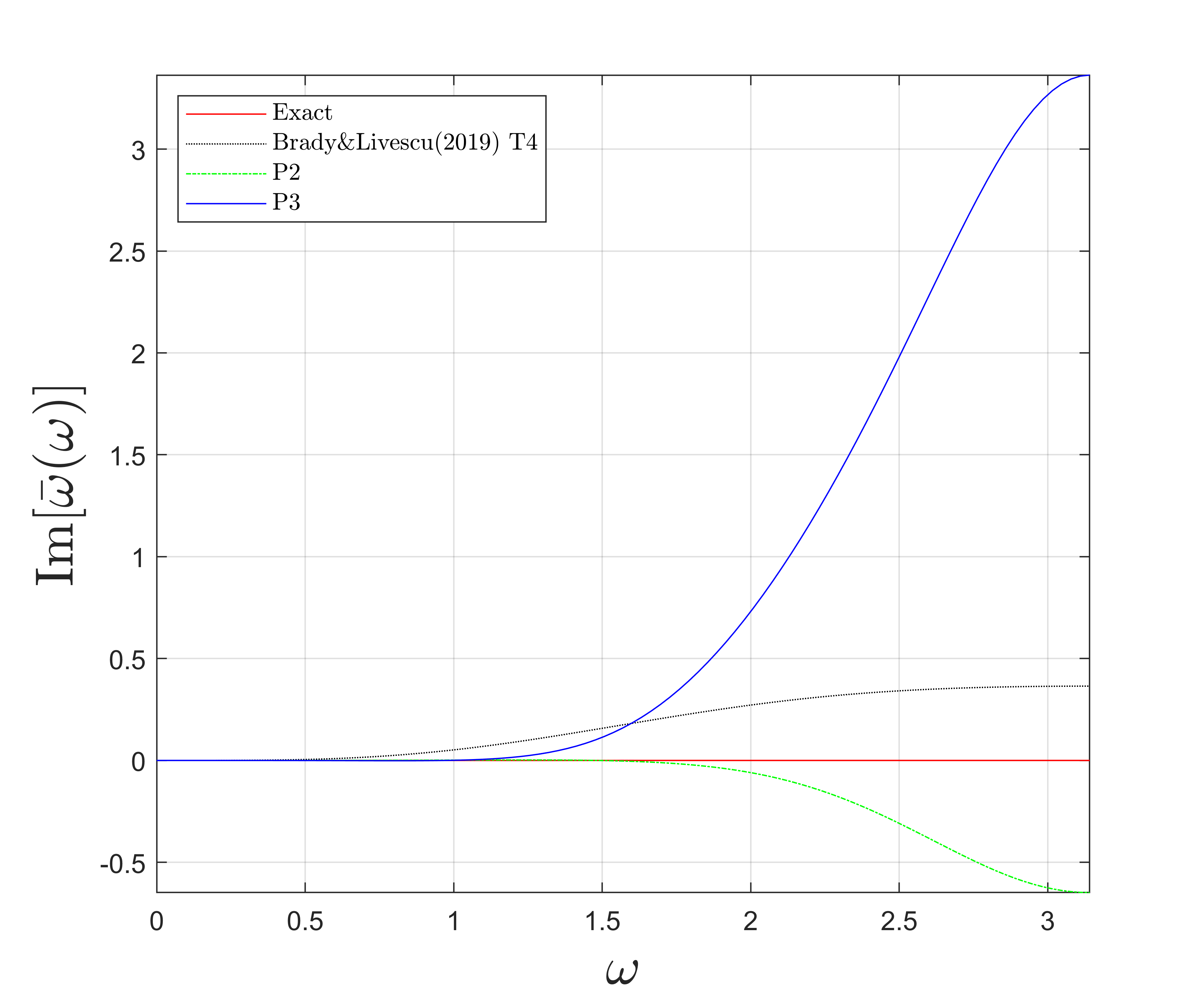}
					\caption{$i=1$}
					\label{fig:resolution-imag-i1}
				\end{subfigure}
				\begin{subfigure}{0.32\textwidth}
					\centering
					\includegraphics[width=\linewidth]{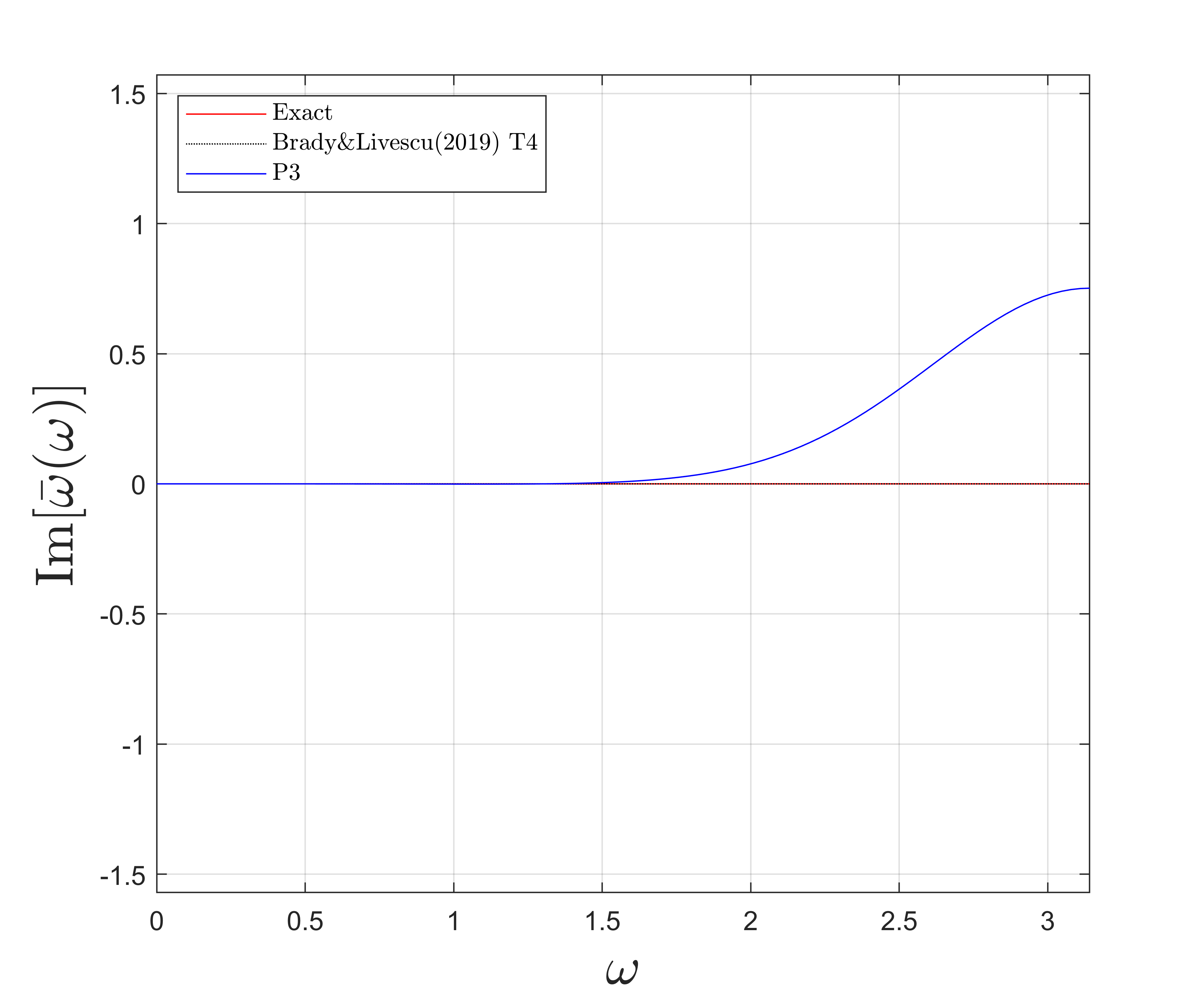}
					\caption{$i=2$}
					\label{fig:resolution-imag-i2}
				\end{subfigure}
				\caption{The imaginary parts of pseudo-wavenumber at nodes \(i=0,1,2\)}
				\label{fig:resolution-imag}
			\end{figure}

			\section{Numerical examples}\label{numerical}
			In this section, we conduct numerical experiments to test the accuracy orders of the three schemes, and demonstrate their performance in simulating complex fluids. 
			
			\subsection{One-dimensional linear advection}
			In this section, we use a simple one-dimensional linear advection equation to test the convergence order of the three schemes.
			Specifically, the equation is given by
			\begin{equation}
				\frac{\partial{u}}{\partial{t}} + \frac{\partial{u}}{\partial{x}} = 0,
			\end{equation}
			subject to the initial and boundary conditions:
			\begin{equation}
				u(x,0) = \sin(x), \quad u(0,t) = \sin(t).
			\end{equation}
			The exact solution is clearly
			\begin{equation}
				u(x,t) = \sin(x - t).
			\end{equation}
			
			To test the accuracy order using simulations, we employ progressively refined grid sizes($N = 65, 129, 257, 513, 1025$), and set the CFL condition $\Delta t / \Delta x = 0.5$ to ensure $L^2$ stability. 
			Temporal integration is performed using the fourth-order Runge-Kutta method, and all experiments are conducted over $t \in [0,1000]$.
			Convergence-order results are shown in Fig.\ref{fig:example1}. 
			It is evident that the schemes achieve the expected fourth-order accuracy.

			\begin{figure}[!t]
				\centering
				\includegraphics[scale=.5]{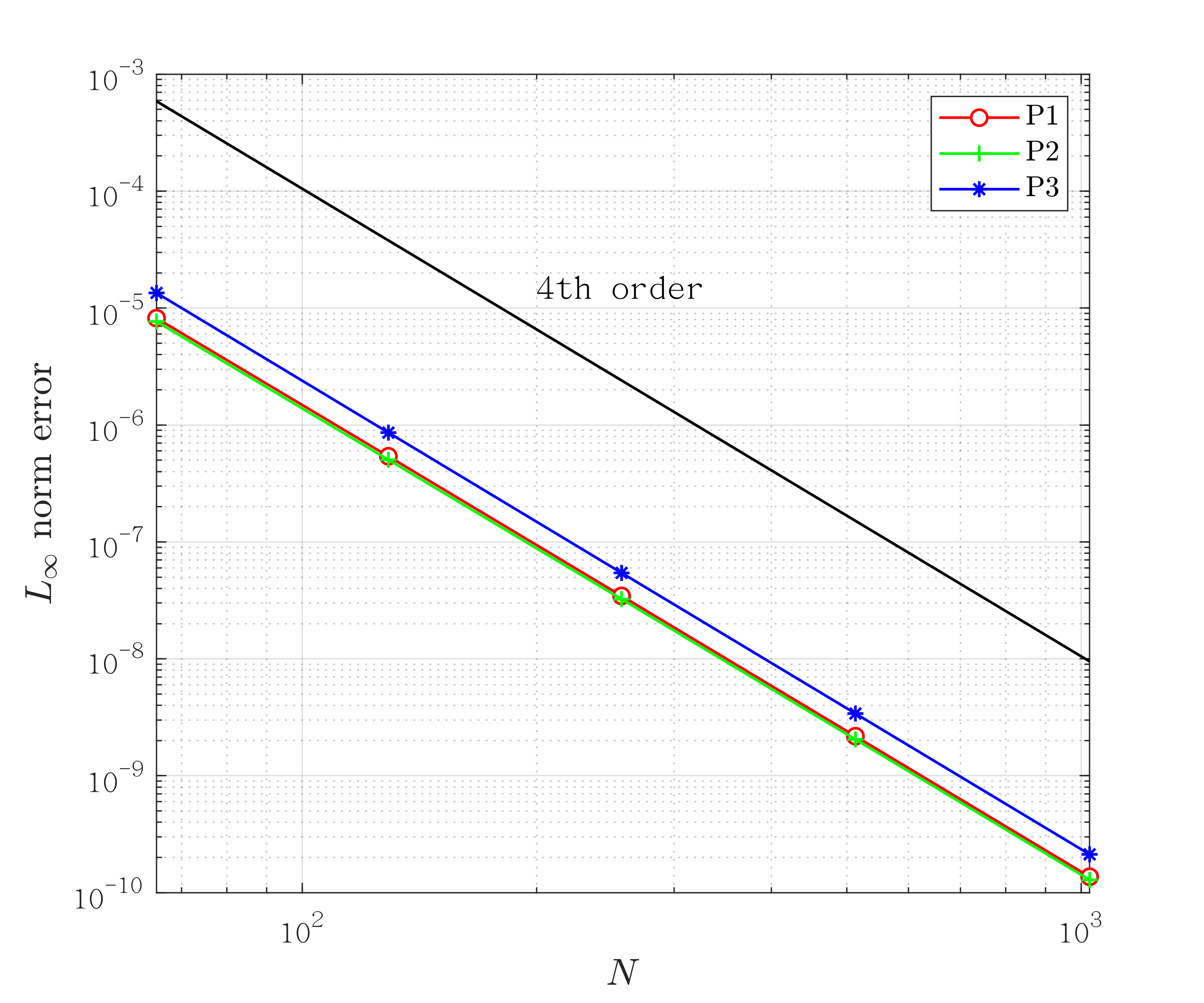}
				\caption{Maximum error in $u$ over $0\leq t\leq1000$, The expected order of each scheme is drawn as a solid black line.}
				\label{fig:example1}
			\end{figure}
			
			\subsection{Two-dimensional advection: non-homogeneous coefficients}
			In this section, we consider a two-dimensional spatial advection equation, where the coefficients (representing the wave speed in two dimensions) are both non-homogeneous. The equation reads
			\begin{equation}\label{2d_advection}
				\frac{\partial u}{\partial t} + c_x \frac{\partial u}{\partial x} + c_y \frac{\partial u}{\partial y} = 0, \quad 0 \leq x, y \leq L, \quad t \in [0,T],
			\end{equation}
			where the coefficients \( c_x \) and \( c_y \) are given as
			\begin{equation}
				c_x = \frac{\partial \psi}{\partial x}, \quad c_y = \frac{\partial \psi}{\partial y}, \quad \psi(x, y) = \sqrt{(x + 0.25)^2 + (y + 0.25)^2}.
			\end{equation}
			Eq. \eqref{2d_advection} is subject to the initial condition and boundary conditions:
			\begin{equation}
				u(x, y, 0) = \sin(2\pi \psi),
			\end{equation}
			and
			\begin{equation}
				\begin{aligned}
					u(0, y, t) &= \sin(2\pi (\psi(0, y) - t)), \\
					u(x, 0, t) &= \sin(2\pi (\psi(x, 0) - t)).
				\end{aligned}
			\end{equation}
			The exact solution to this equation is:
			\begin{equation}
				u(x, y, t) = \sin(2\pi (\psi - t)).
			\end{equation}
			
			In this experiment,  $L=\sqrt{2}$ and the number of grid points in each dimension is $N$, where $N\in\{ 21, 41, 61, 81\}$.
			To test numerical stability, we set $T = 1000$ and perform long-time integration about Eq. \eqref{2d_advection} over $t \in [0,1000]$.
			Results show that the long-time stability of the three schemes is demonstrated via the \( L_\infty \) error of the numerical solution versus time, as shown in {\bf Fig.~\ref{fig:example2}}.
			Clearly, {\bf Fig.~\ref{fig:example2}} confirms the long-time stability of the proposed schemes. 
			Additionally, we present the convergence orders of the three schemes in Fig.~\ref{fig:example2-ErrorRate}. 
			The results align with the theoretically predicted order.

			\begin{figure}[!t]
				\centering
				\begin{subfigure}{.32\textwidth}
					\centering
					\includegraphics[width=\linewidth]{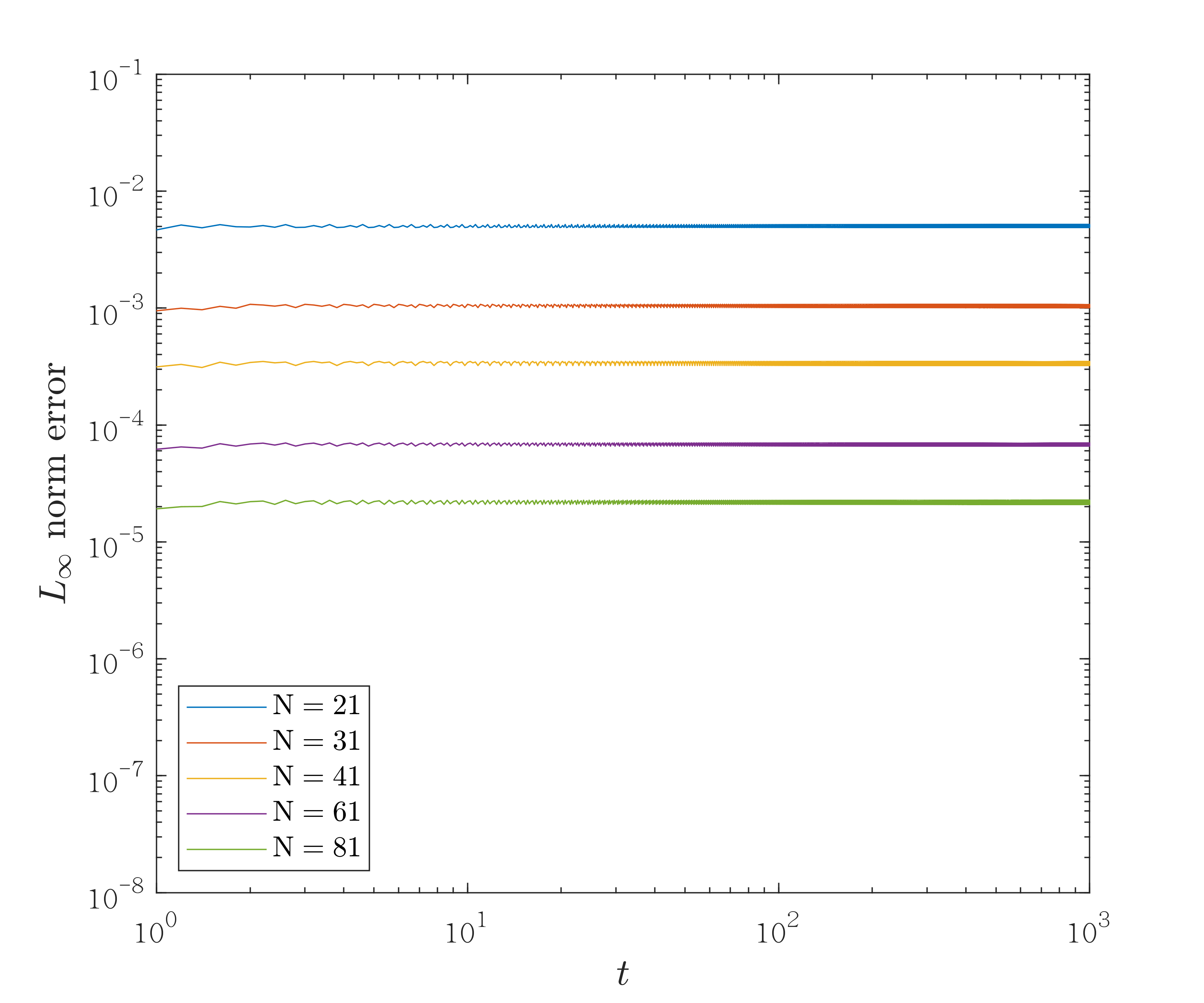}
					\caption{P1} 
				\end{subfigure}%
				\begin{subfigure}{.32\textwidth}
					\centering
					\includegraphics[width=\linewidth]{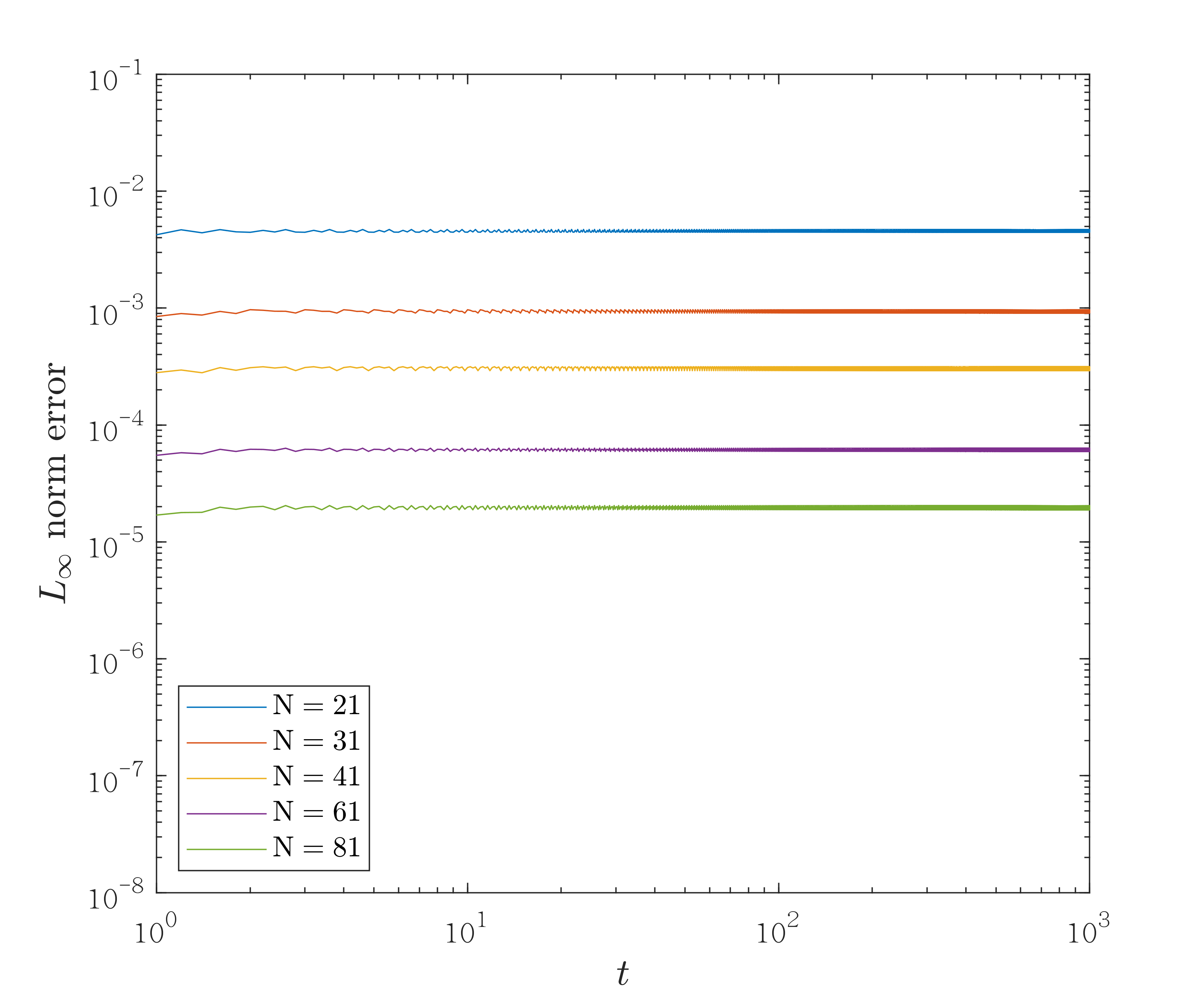}
					\caption{P2} 
				\end{subfigure}%
				\begin{subfigure}{.32\textwidth}
					\centering
					\includegraphics[width=\linewidth]{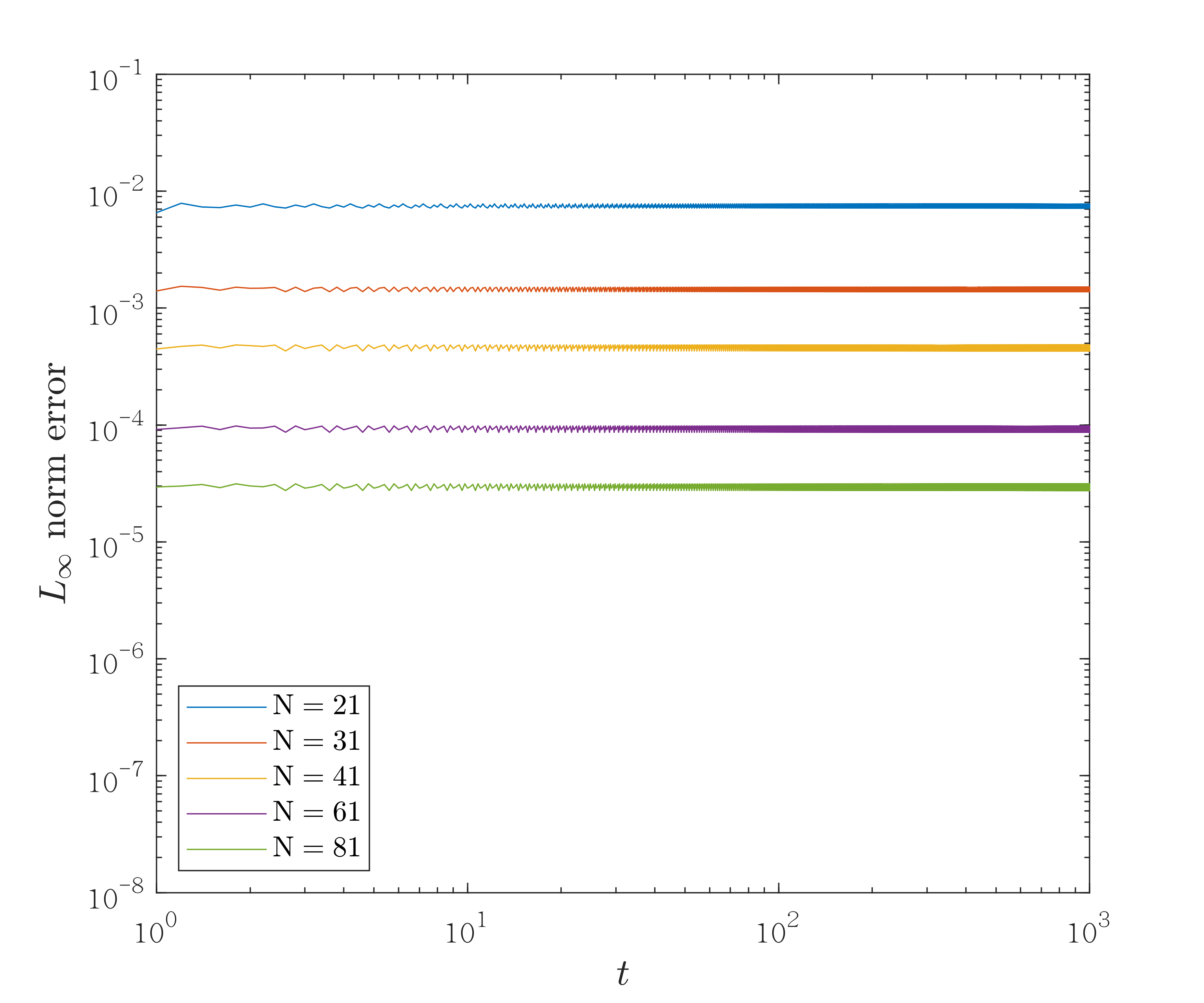}
					\caption{P3} 
				\end{subfigure}
				\caption{$L_\infty$ norm of error over $0\leq t\leq1000$, for indicated $N$ and constant timestep of $\Delta t=0.001$ for the varying coefficient scalar wave equation test.}
				\label{fig:example2}
			\end{figure}
			
			\begin{figure}[!t]
				\centering
				\includegraphics[scale=.5]{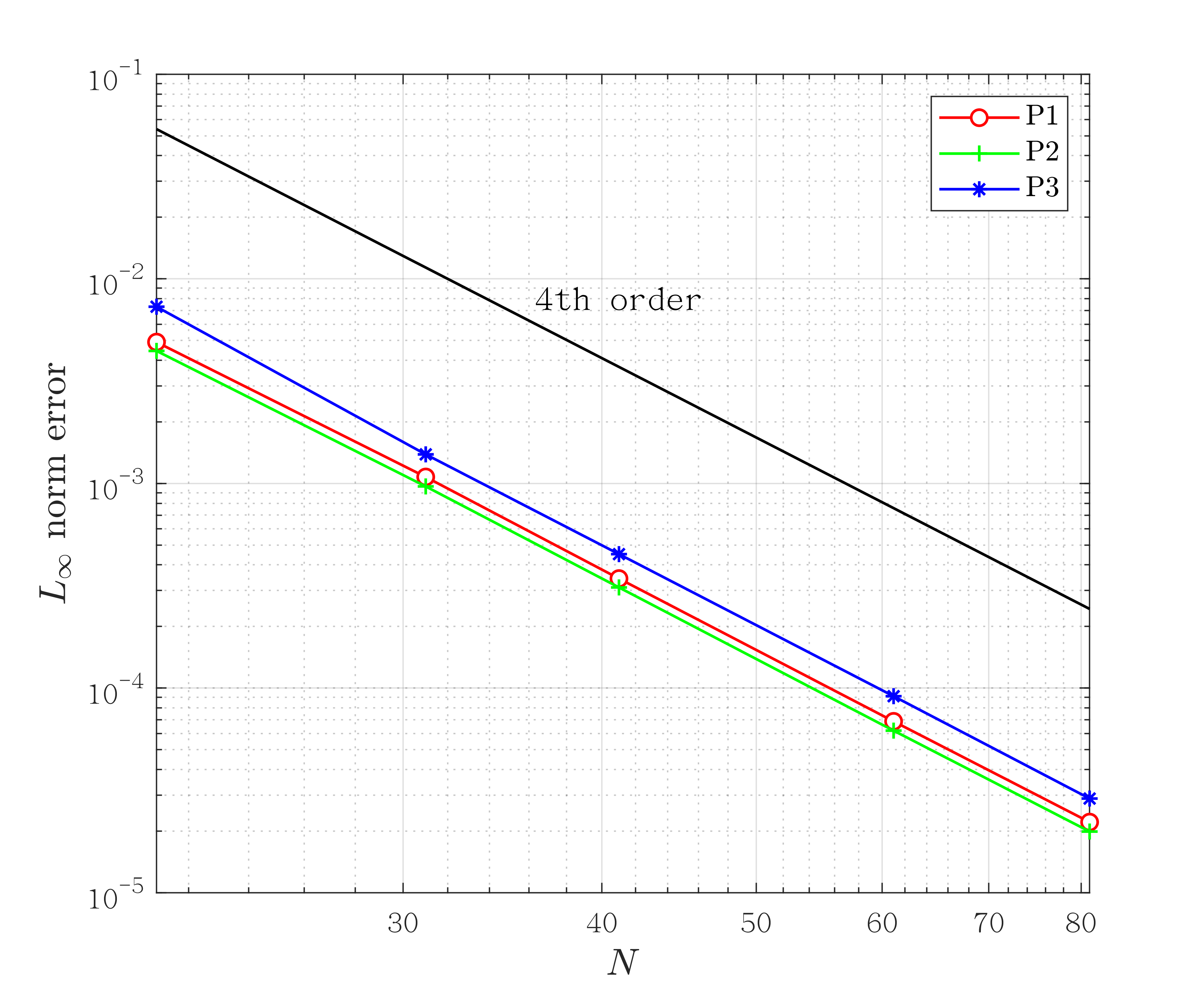}
				\caption{Maximum error in $u$ over $0\leq t\leq1000$, The expected order of each scheme is drawn as a solid black line.}
				\label{fig:example2-ErrorRate}
			\end{figure}
			
			\subsection{Two-dimensional inviscid vortex convection in a supersonic flow}
			In the last experiment, we consider the convection of a two-dimensional isentropic vortex in a supersonic free stream. 
			The governing equations are the compressible Euler equations:
			\begin{equation}
				\frac{\partial\mathbf{Q}}{\partial t} + \frac{\partial\mathbf{E}}{\partial x} + \frac{\partial\mathbf{F}}{\partial y} = \mathbf{0}, \quad x\in[-0.5L,L], y\in [-0.75L,0.75L],
			\end{equation}
			where
			\begin{equation}
				\mathbf{Q} = \begin{pmatrix}
					\rho \\
					\rho u \\
					\rho v \\
					\rho e_t
				\end{pmatrix}, \quad
				\mathbf{E} = \begin{pmatrix}
					\rho u \\
					\rho u^2 + p \\
					\rho uv \\
					(\rho e_t + p)u
				\end{pmatrix}, \quad
				\mathbf{F} = \begin{pmatrix}
					\rho v \\
					\rho uv \\
					\rho v^2 + p \\
					(\rho e_t + p)v
				\end{pmatrix}.
			\end{equation}
			Here, \(\rho\), $(u,v)$, and \(p\) represent the fluid density, flow velocity and the dynamic pressure, respectively. 
			$e_t$ denotes the total energy per unit mass,  which is defined as \(e_t = p / [(\gamma - 1)\rho] + (u^2 + v^2)/2\).
			In this computation, \(\gamma \) is set to $1.4$.
			The initial conditions are set as 
			\begin{equation}\label{woxuan}
				\begin{aligned}
					\frac{\rho(x, y)}{\rho_{\infty}} &= \left(1 - \frac{\gamma - 1}{2} \psi^2(x, y)\right)^{\frac{1}{\gamma - 1}}, \\
					\frac{u(x, y)}{a_{\infty}} &= M_{\infty} + K y \psi(x, y), \\
					\frac{v(x, y)}{a_{\infty}} &= -K x \psi(x, y), \\
					\frac{p(x, y)}{p_{\infty}} &= \left(\frac{\rho}{\rho_{\infty}}\right)^\gamma,
				\end{aligned}
			\end{equation}
			where
			\begin{equation}
				\psi(x, y) = \frac{\epsilon}{2\pi} \sqrt{\exp\left[1 - (x^2 + y^2) / R^2\right]}.
			\end{equation}
			Here, $\epsilon \in \{0.1,1.5,4\}$, where $\epsilon = 0.1$ stands for the linear case, and higher values represent nonlinear cases.
			$R$ stands for the size of the vortex, which is set to $0.08L$ initialy. 
			The free stream velocity is given by \(u_\infty = M_\infty a_\infty\), where \(a_\infty = \sqrt{\gamma p_\infty / \rho_{\infty}}\) denotes the speed of sound, and \(M_\infty\) denotes the Mach number.
			The exact solution is obtained by substituting \(\hat{x} = x - u_\infty t\) into equation Eq. \eqref{woxuan}.
			Supersonic inflow boundary conditions are applied at the domain inlet in the free stream direction, while no boundary conditions are applied at the exit boundary. Periodic boundary conditions are applied in the \(y\)-direction.
			
			To perform the simulation, a uniform grid is used to discretize the entire spatial domain, with the number of grid points in the $x-$ and $y-$directions both set to $150$.
			The CFL number is set to $0.5$, and the time-step $\Delta t$ is set to $0.5 h_x / u_\infty$ to meet the CFL condition. 
			To test the long time stability of the numerical simulation, we simulate the vortex function $\omega$, which is related with the velocity via $\omega = \partial v / \partial x - \partial u / \partial y$, over a long time interval. 
			During this interval, the vortex advects from the left boundary, travels through the entire domain, arrives at the right boundary, and finally leaves the domain.
			Some snapshots of the advection process are shown in Fig.\ref{fig:example3-phase}.
			Clearly, the vortex advects steadily and preserves its shape well in the long-time simulation, indicating the stability of the numerical schemes P1, P2, and P3. 
			Fig.\ref{fig:example3-error} shows the pressure error over the time interval $[0 \leq t u_\infty / L \leq 200]$ for different values of $\epsilon$.
			Obviously, the numerical error decreases rapidly, especially for scheme P3, indicating the fast convergence of all proposed numerical schemes.
			Finally, we also present the convergence order of the three schemes in {\bf Fig.~\ref{fig:example3-ErrorRate}}.
			It is evident that all schemes achieve the expected convergence order, demonstrating the success of our proposed numerical schemes.
			
			\begin{figure}[!t]
				\centering
				\begin{subfigure}{.32\textwidth}
					\centering
					\includegraphics[width=\linewidth]{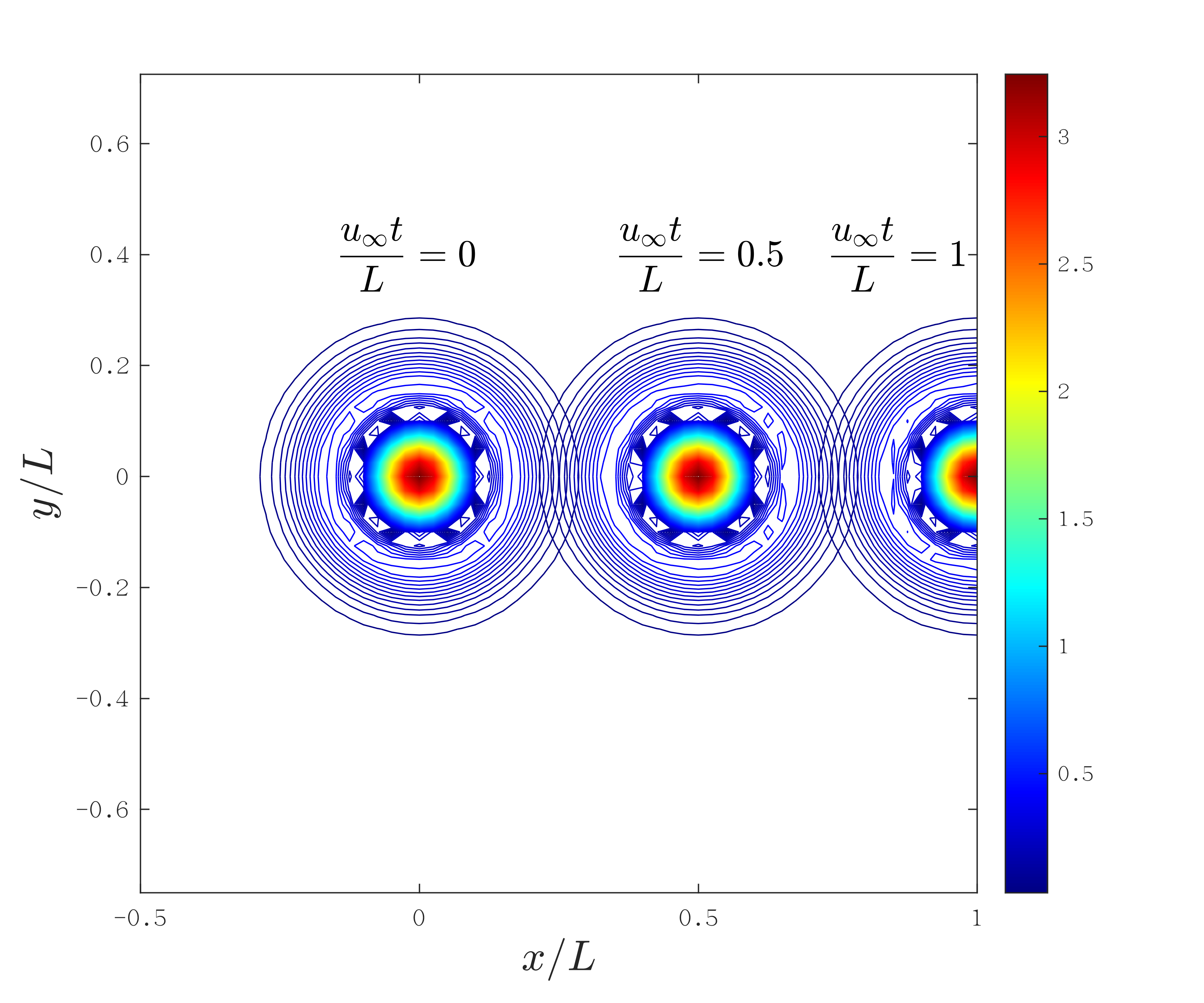}
					\caption{$P1$} 
				\end{subfigure}%
				\begin{subfigure}{.32\textwidth}
					\centering
					\includegraphics[width=\linewidth]{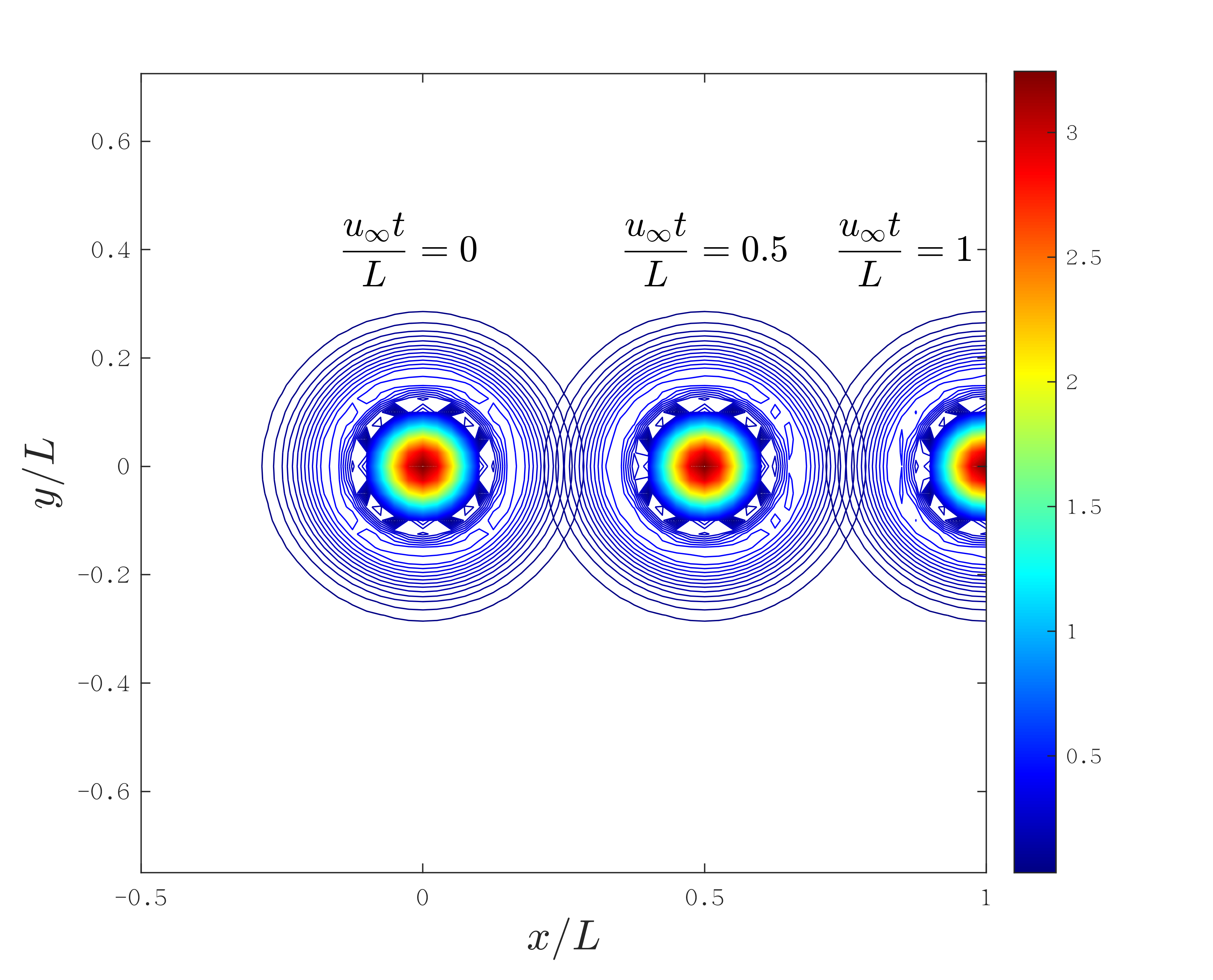}
					\caption{$P2$} 
				\end{subfigure}%
				\begin{subfigure}{.32\textwidth}
					\centering
					\includegraphics[width=\linewidth]{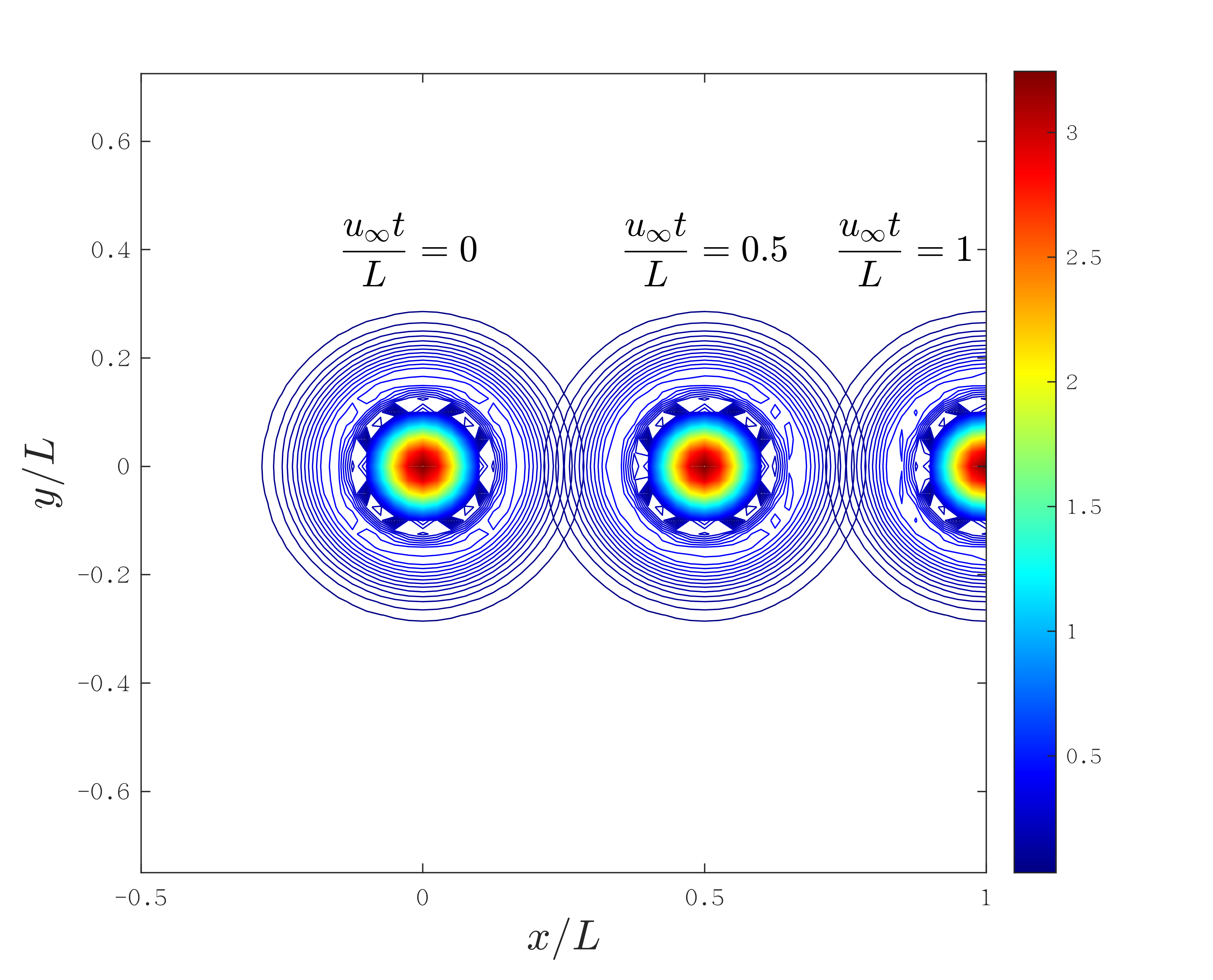}
					\caption{$P3$} 
				\end{subfigure}
				\caption{Two-dimensional inviscid vortex convection calculated by the present schemes. Contours of normalised absolute vorticity: $|\omega|L/(u_\infty\varepsilon)$. $\varepsilon=0.1$, $(N\times N)=(60,60)$.}
				\label{fig:example3-phase}
			\end{figure}

			\begin{figure}[!t]
				\centering
				\begin{subfigure}{.32\textwidth}
					\centering
					\includegraphics[width=\linewidth]{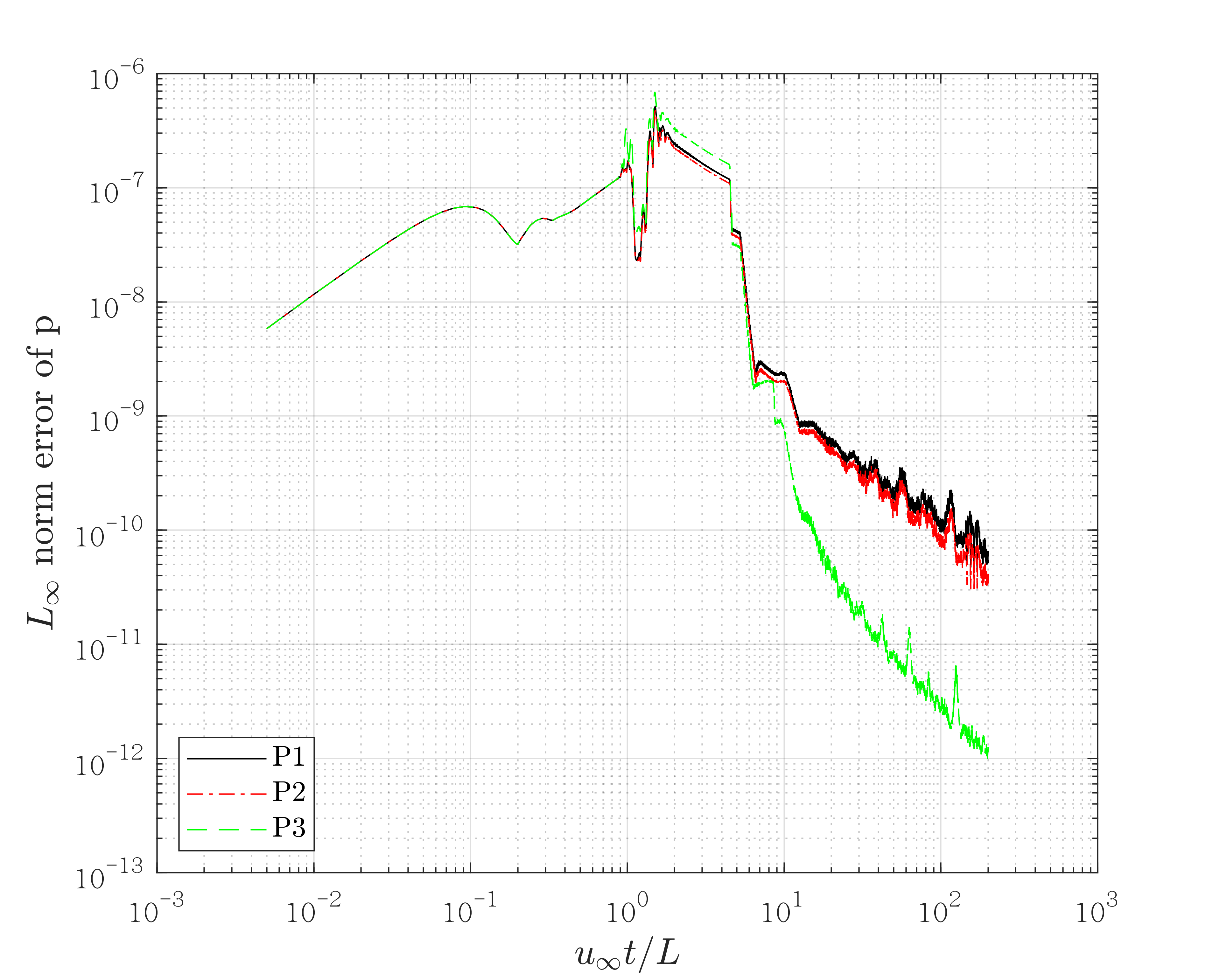}
					\caption{$\varepsilon=0.1$} 
				\end{subfigure}%
				\begin{subfigure}{.32\textwidth}
					\centering
					\includegraphics[width=\linewidth]{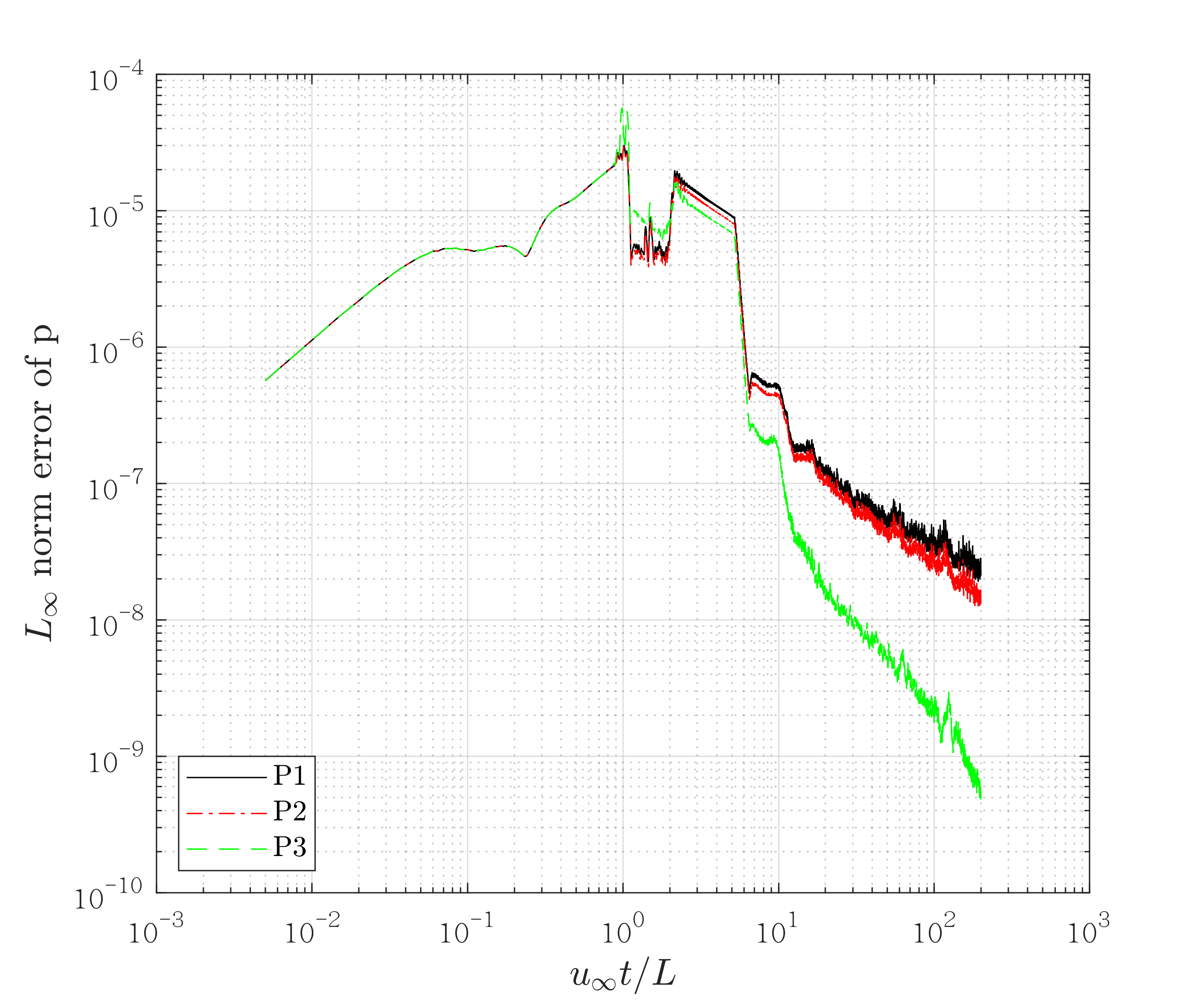}
					\caption{$\varepsilon=1.5$} 
				\end{subfigure}%
				\begin{subfigure}{.32\textwidth}
					\centering
					\includegraphics[width=\linewidth]{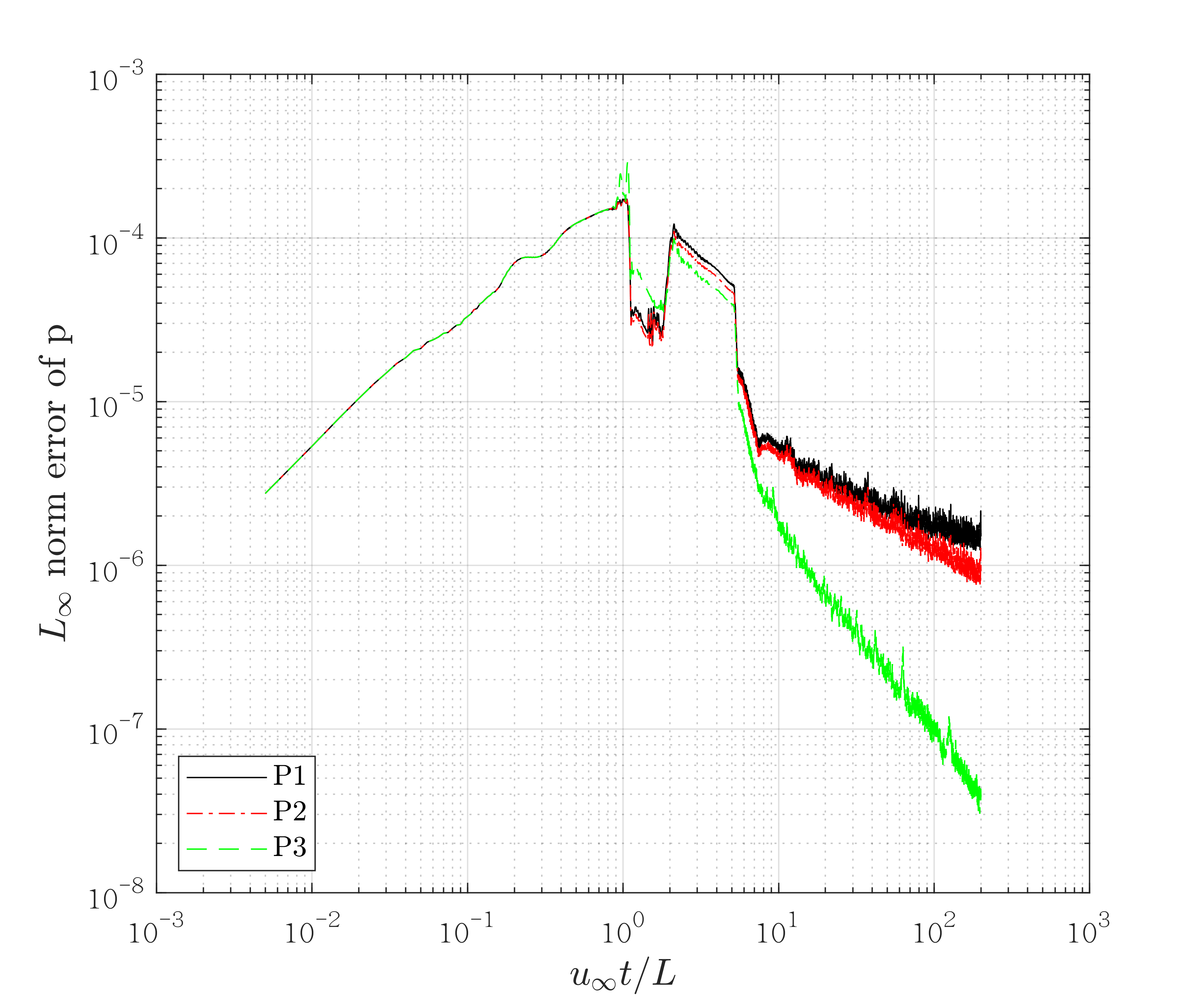}
					\caption{$\varepsilon=4.0$} 
				\end{subfigure}
				\caption{Time history of the $L_\infty$ norm errors in the calculations of the two-dimension inviscid convection problem for $\epsilon=0.1,\epsilon=1.5$ and $\epsilon=4$.}
				\label{fig:example3-error}
			\end{figure}

			\begin{figure}[!t]
				\centering
				\begin{subfigure}{.32\textwidth}
					\centering
					\includegraphics[width=\linewidth]{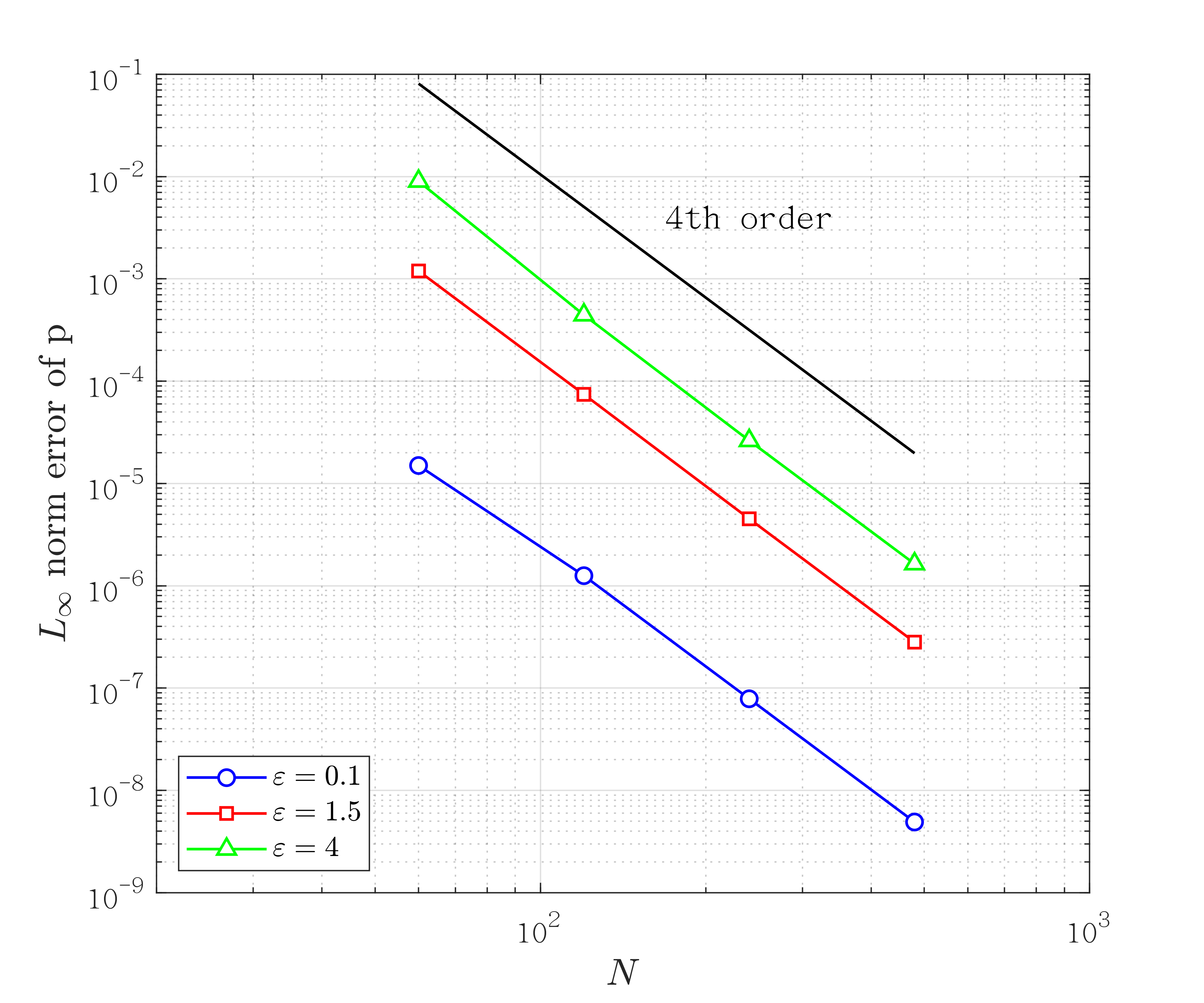}
					\caption{P1} 
				\end{subfigure}%
				\begin{subfigure}{.32\textwidth}
					\centering
					\includegraphics[width=\linewidth]{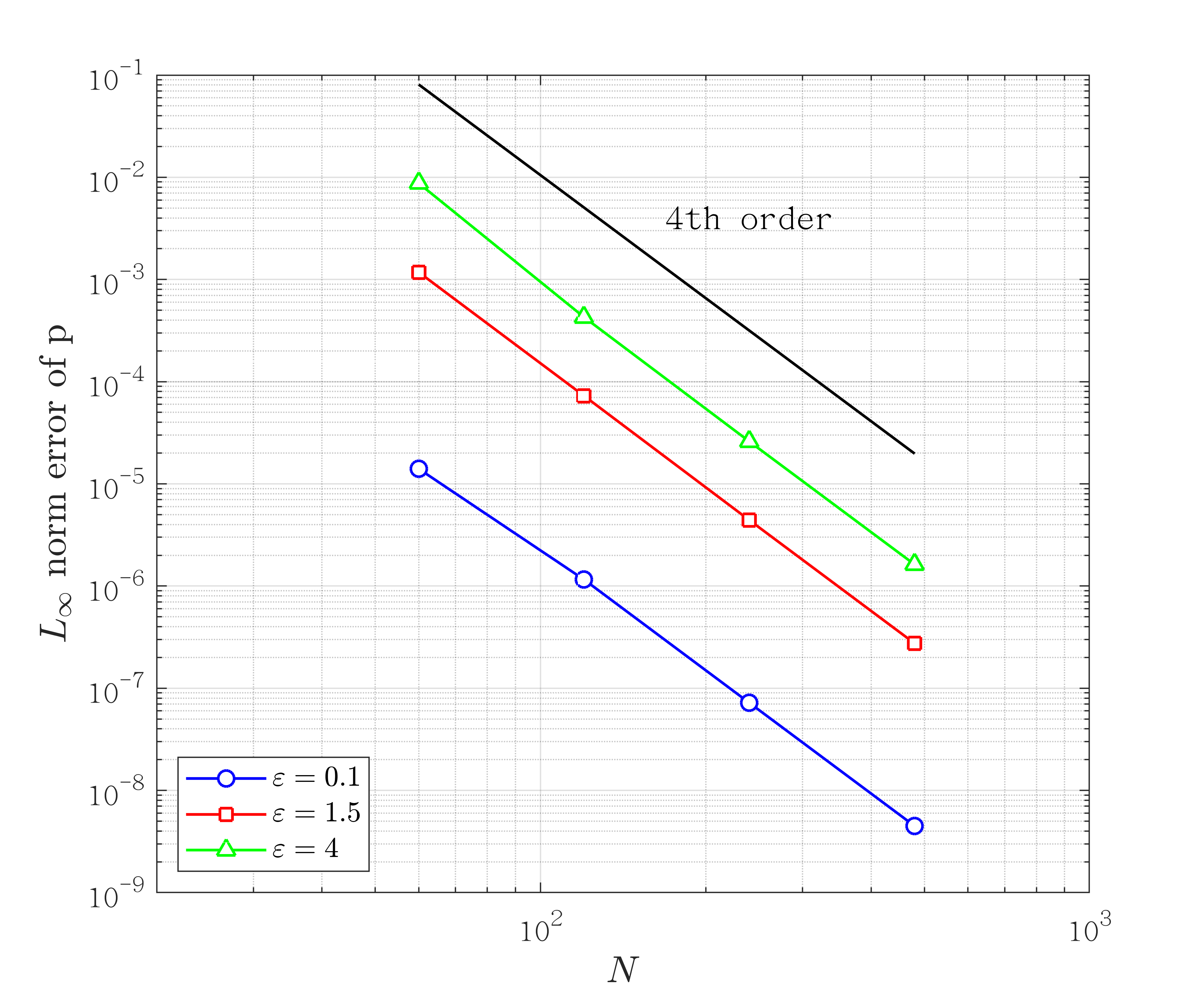}
					\caption{P2} 
				\end{subfigure}%
				\begin{subfigure}{.32\textwidth}
					\centering
					\includegraphics[width=\linewidth]{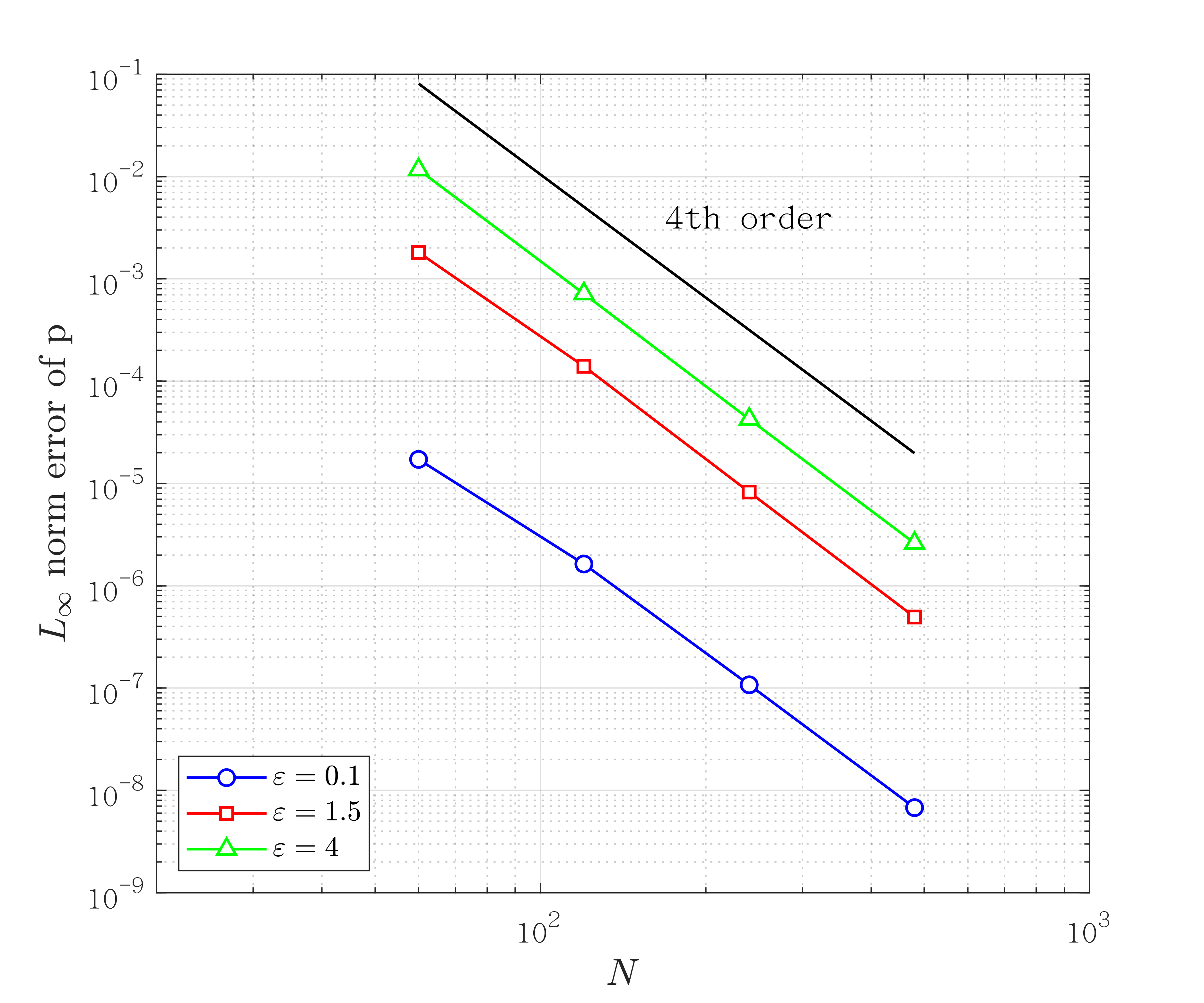}
					\caption{P3} 
				\end{subfigure}
				\caption{Maximum error in $p$ over $0\leq u_\infty t/L\leq 200$, The expected order of each scheme is drawn as a solid black line.}
				\label{fig:example3-ErrorRate}
			\end{figure}
			
			\section{Conclusion}\label{conclusion}
			In this paper, we present a novel globally conservative compact finite difference framework for hyperbolic conservation laws, which advances the boundary treatment methodology proposed by Lele.
			Within this framework, the highest algebraic precision is attained when discretizing the continuous conservation law through numerical integration into its discrete form.
			Fourth-order conservative schemes are developed under this framework, with optimized boundary schemes designed to enhance resolution and asymptotic stability.
			Numerical experiments are conducted to verify the convergence order and long-term stability of the proposed schemes. In our subsequent research, we will develop conservative hybrid compact difference schemes for discontinuous problems.

\section*{Acknowledgments}
The work of Z. Sun was supported in part by NSFC (No.11901496) and the Project of Hunan Provincial Education Department of China (No.24B0168). The work of W. Yao was supported in part by NSFC (No.12471378) and the Natural Science Foundation of Guangdong Province of China (No.2024A1515010356). The work of L. Wang was supported in part by the Hunan Provincial Natural Science Foundation (No.2022JJ40117).


\newpage
\section{Appendix}

\subsection{The necessary condition for the construction of schemes P1, P2 and P3}\label{app:necessary}

Scheme P1 must satisfy Eq. \eqref{taylor:i=0}, along with two additional conditions:
\begin{equation}\label{bsh1-P1}
	\left\{
	\begin{aligned}
		w'_0a_{00} &= w_0 - \frac{1}{6},\\
		w'_0a_{01} &= w_1 - \frac{5}{6},\\
		w'_0a_{02} &= w_2-1,\\
		w'_0a_{03} &= w_3-1.
	\end{aligned}
	\right.
\end{equation}
\begin{equation}\label{bsh2-P1}
	\left\{
	\begin{aligned}
		w'_0b_{00} &= -\frac{1}{2},\\
		w'_0b_{01} &= \frac{1}{2},\\
		w'_0b_{02} &= 0,\\
		w'_0b_{03} &= 0.
	\end{aligned}
	\right.
\end{equation}
 Scheme P2 must satisfy the constraints in Eqs. \eqref{taylor:i=0} and \eqref{taylor:i=1}, as well as those in Eqs. \eqref{bsh1-P2} and \eqref{bsh2-P2}.
\begin{equation}\label{bsh1-P2}
	\left\{
	\begin{aligned}
		w'_0a_{00}+w'_1a_{10} &= w_0,\\
		w'_0a_{01}+w'_1a_{11} &= w_1 - \frac{1}{6},\\
		w'_0a_{02}+w'_1a_{12} &= w_2 - \frac{5}{6},\\
		w'_0a_{03}+w'_1a_{13} &= w_3-1.
	\end{aligned}
	\right.
\end{equation}
\begin{equation}\label{bsh2-P2}
	\left\{
	\begin{aligned}
		w'_0b_{00}+w'_1b_{10} &= -1,\\
		w'_0b_{01}+w'_1b_{11} &= \frac{1}{2},\\
		w'_0b_{02}+w'_1b_{12} &= \frac{1}{2},\\
		w'_0b_{03}+w'_1b_{13} &= 0.
	\end{aligned}
	\right.
\end{equation}
 For scheme P3, the constraint Eqs. \eqref{taylor:i=0},  \eqref{taylor:i=1},  \eqref{taylor:i=2}, \eqref{bsh1} and \eqref{bsh2} must be satisfied, which has been given in the body part.
 
 \begin{lemma}
 For schemes P1, P2 and P3, if the corresponding set of equations has a solution, then the following relationship will hold
\begin{equation}\label{w0w1w2w3}
	\begin{aligned}
		w_1 &= -3w_0+\frac{55}{24},\\
		w_2 &= 3w_0-\frac{1}{6},\\
		w_3 &= -w_0+\frac{11}{8},
	\end{aligned}
\end{equation}
which is a common condition for the there shemes P1, P2 and P3.
\end{lemma}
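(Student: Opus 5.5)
The plan is to derive, from the conservation constraints \eqref{bsh1}--\eqref{bsh2} (or \eqref{bsh1-P1}--\eqref{bsh2-P1}, \eqref{bsh1-P2}--\eqref{bsh2-P2}) together with the Taylor conditions, three linear moment identities for the boundary weights of the form
\[
\sum_{j=0}^{3}(w_j-1)\,j^m=\gamma_m,\qquad m=0,1,2,
\]
with constants $\gamma_m$ that do not depend on $l$. Solving this $3\times4$ system for $w_1,w_2,w_3$ in terms of $w_0$ should give \eqref{w0w1w2w3}. As a consistency check, summing \eqref{w0w1w2w3} gives $\sum_{j=0}^3(w_j-1)=-\tfrac12$, the familiar trapezoidal end correction, so $\gamma_0=-\tfrac12$.

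\textbf{Step 1: exactness on cubics.} For any polynomial $p$ of degree $\le 3$ I would first check that $A\boldsymbol{p}'=\frac1h B\boldsymbol{p}$ holds exactly, row by row. For the boundary rows this follows from \eqref{taylor:i=0}--\eqref{taylor:i=2}, which are exactly the moment conditions \eqref{eq:order_relationship} for $j=1,2,3$ plus the consistency row. For the interior rows it follows because \eqref{eq:4_order_inner_compact_scheme} is exact up to degree $4$.

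\textbf{Step 2: localize to the left end.} Because every relation is local, I would work on a semi-infinite grid $j\ge0$. There $w'_j=1$ for $j\ge l$ and $w_j=1$ for $j\ge4$, and the left-hand equations of \eqref{bsh1}--\eqref{bsh2} are precisely the column conditions of $W'A=W$ and $W'B=[-1,0,\dots]$ restricted to columns $0,\dots,3$. Columns $j\ge4$ are satisfied automatically, since interior columns of $A$ sum to $1$ and interior columns of $B$ sum to $0$.

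\textbf{Step 3: form the weighted sum.} Fix a large cut-off $J$ and take $\boldsymbol{p}$ to be the nodal values of $p(x)=x^{m+1}$, $m=0,1,2$. Multiply the exact row identities of Step 1 by $w'_i$ and sum over $i\le J$. Using the column identities of Step 2, the left side becomes $\sum_{j}w_j\,p'(x_j)$ plus a tail near $J$. The right side becomes $-p(x_0)/h$ plus the analogous tail. The tails near $J$ involve only the interior stencil with unit weights, so they coincide with those of the reference configuration ($w_j\equiv1$ and the bare interior scheme).

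\textbf{Step 4: compare with the reference.} Subtracting the reference telescoping identity cancels the tails. What remains is $\sum_{j=0}^{3}(w_j-1)\,p'(x_j)$ equal to an explicit constant, which is computed from the interior stencil's contribution at columns $0,\dots,3$, i.e.\ the numbers $\tfrac16,\tfrac56$ and $\tfrac12$ visible in \eqref{bsh1}--\eqref{bsh2}. This yields $\gamma_0,\gamma_1,\gamma_2$. Since the derivation never uses the individual $a_{ij},b_{ij},w'_i$ values, the same identities, and hence the same $\gamma_m$, arise for $l=1,2,3$. That is why the relation is common to P1, P2 and P3.

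\textbf{Main obstacle.} The delicate point is getting the constants $\gamma_m$ right in the semi-infinite telescoping, that is, correctly accounting for the partial interior contributions $\tfrac16,\tfrac56$ in columns $l,l+1$ and $\tfrac12$ in $B$. As a fallback I would verify the identities directly for $l=1$: there $w'_0$ can be eliminated from \eqref{bsh1-P1}--\eqref{bsh2-P1}, giving $a_{0j}=-2b_{00}(w_j-\ldots)$ and $b_{00}=-b_{01}=-1/(2w'_0)$, $b_{02}=b_{03}=0$. Substituting these into \eqref{taylor:i=0} gives three linear equations in $w_0,\dots,w_3$, which can be solved explicitly to reproduce \eqref{w0w1w2w3}. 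I would then argue that the general-$l$ case reduces to the same three equations.
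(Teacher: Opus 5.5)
Your proposal is correct, but it is organized differently from the paper's proof. The paper works with coefficient matrices. It writes the Taylor conditions of each boundary row as $A_i\vec a_i=B_i\vec b_i$ and notes that $B_i^{-1}A_i=C$ is the same for $i=0,1,2$; this is the cubic-exactness map, $C_{jk}=L_j'(k)$ for the Lagrange basis on the nodes $0,1,2,3$. The column equations therefore collapse to $Cv_a=v_b$, where $v_a,v_b$ are the right-hand sides of $W'A=W$ and $W'B=[-1,0,\dots,0,1]$ in columns $0$--$3$, and the paper then checks P1, P2, P3 one at a time.

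Your argument is the dual of this: $Cv_a=v_b$ says exactly that $\sum_j(v_b)_j\,p(j)=\sum_j(v_a)_j\,p'(j)$ for all cubics $p$. You add a reference subtraction that removes the $l$-dependence in one step.

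To make Step~4 precise, take the reference to be the interior stencil applied at rows $i\ge1$ with unit multipliers. Its column sums are $r=(\tfrac16,\tfrac56,1,1,\dots)$, not $w_j\equiv1$. The subtraction then gives
$h\sum_{j=0}^{3}(w_j-r_j)\,p'(x_j)=\tfrac12\bigl(p(x_1)-p(x_0)\bigr)$.
This is precisely the identity the P1 equations impose, so every $l$ reduces to your $l=1$ fallback. Taking $p=x,x^2,x^3$ gives $\gamma=(-\tfrac12,\tfrac1{12},0)$, and the Vandermonde solve reproduces \eqref{w0w1w2w3}.

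One small correction: the $l$-dependent partial contributions $\tfrac16,\tfrac56$ sit in columns $l-1,l$, not $l,l+1$. In your argument they are absorbed into $W'A=W$ and never need tracking, so this does not affect the proof.

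What your route buys:
\begin{itemize}
\item It gives a real explanation of why the condition is common to the three schemes.
\item It gives a transparent count of independent equations. The paper's $C$ is singular ($\mathbf 1^\top C=0$), which its ``equivalent to'' leaves implicit.
\item It gives an interpretation: the admissible $(w_0,\dots,w_3)$ form the one-parameter family of four-point end corrections that make the unit-weight rule exact on quadratics. This family contains the familiar $(\tfrac38,\tfrac76,\tfrac{23}{24},1)$ at $w_0=\tfrac38$.
\end{itemize}
The paper's route, in exchange, is a short mechanical check once $C$ is in hand.
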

\begin{proof}
For the convenience of discussion, we rewrite Eq. \eqref{taylor:i=0}-Eq. \eqref{taylor:i=2} as 
\begin{equation}\label{relation_Ai_Bi}
	A_i \vec{a}_i = B_i \vec{b}_i, \quad  i =0,1,2,\end{equation}
where $\vec{a}_i = (a_{i0},a_{i1},a_{i2},a_{i3})^\top$ and  $\vec{b}_i = (b_{i0},b_{i1},b_{i2},b_{i3})^\top$, and $A_i$, $B_i$ are coefficent matrices deirved from Eq. \eqref{taylor:i=0}-Eq. \eqref{taylor:i=2}.
It is easy to check that 
\begin{equation} \label{coe-mat-relation}
	B_0^{-1} A_0 =B_1^{-1} A_1 =B_2^{-1} A_2 = 
	\begin{pmatrix}
		-\frac{11}{6}    &       -\frac{1}{3}     &       \frac{1}{6}    &       -\frac{1}{3} \\
		3       &      -\frac{1}{2}       &    -1       &       \frac{3}{2}     \\
		-\frac{3}{2}    &        1        &      \frac{1}{2}     &      -3       \\
		\frac{1}{3}     &      -\frac{1}{6}       &     \frac{1}{3}      &     \frac{11}{6}  
	\end{pmatrix} \triangleq C.
\end{equation}	

For scheme P1
\begin{equation}
	A=\begin{bmatrix}
		a_{00}&a_{01}&a_{02}&a_{03}&\cdots&0&0&0&0\\
		\frac{1}{6}&\frac{2}{3}&\frac{1}{6}&0&0&\cdots&0&0&0\\
		0&\frac{1}{6}&\frac{2}{3}&\frac{1}{6}&0&0&\cdots&0&0\\
		0&0&\frac{1}{6}&\frac{2}{3}&\frac{1}{6}&0&0&\cdots&0\\
		\vdots&\ddots&\ddots&\ddots&\ddots&\ddots&\ddots&\ddots&\vdots\\
		0&\cdots&0&0&\frac{1}{6}&\frac{2}{3}&\frac{1}{6}&0&0\\
		0&0&\cdots&0&0&\frac{1}{6}&\frac{2}{3}&\frac{1}{6}&0\\
		0&0&0&\cdots&0&0&\frac{1}{6}&\frac{2}{3}&\frac{1}{6}\\
		0&0&0&0&\cdots&a_{03}&a_{02}&a_{01}&a_{00}
	\end{bmatrix}
\end{equation}
and
\begin{equation}
	B=\begin{bmatrix}
		b_{00}&b_{01}&b_{02}&b_{03}&\cdots&0&0&0&0\\
		-\frac{1}{2}&0&\frac{1}{2}&0&0&\cdots&0&0&0\\
		0&-\frac{1}{2}&0&\frac{1}{2}&0&0&\cdots&0&0\\
		0&0&-\frac{1}{2}&0&\frac{1}{2}&0&0&\cdots&0\\
		\vdots&\ddots&\ddots&\ddots&\ddots&\ddots&\ddots&\ddots&\vdots\\
		0&\cdots&0&0&-\frac{1}{2}&0&\frac{1}{2}&0&0\\
		0&0&\cdots&0&0&-\frac{1}{2}&0&\frac{1}{2}&0\\
		0&0&0&\cdots&0&0&-\frac{1}{2}&0&\frac{1}{2}\\
		0&0&0&0&\cdots&-b_{03}&-b_{02}&-b_{01}&-b_{00}
	\end{bmatrix}.
\end{equation}
In addition, according to Eq. \eqref{wp_relation_w}, 
\begin{equation} 
	\vec{a}_0 \cdot 
	w_0'
	= \begin{pmatrix}
		w_0-\frac{1}{6} \\  w_1-\frac{5}{6} \\ w_2-1 \\ w_3- 1
	\end{pmatrix},
\end{equation}
\begin{equation}\label{bsh2-matrix-form-P1} 
	\vec{b}_0  
	\cdot
	w_0'
	= 
	\begin{pmatrix}
		-\frac{1}{2} \\  \frac{1}{2} \\ 0 \\ 0
	\end{pmatrix}.
\end{equation} 
The other weights can be set $w_i'=1,i=1,\dots,N-1$ and $w_i=1,i=4,\dots,N-4$. Using Eq. \eqref{relation_Ai_Bi}, and noticing Eq. \eqref{coe-mat-relation}, one rewrites Eq. \eqref{bsh2-matrix-form-P1} as
\begin{equation}
	C
	\begin{pmatrix}
		w_0-\frac{1}{6} \\  w_1-\frac{5}{6} \\ w_2-1 \\ w_3- 1
	\end{pmatrix}
	= 
	\begin{pmatrix}
		-\frac{1}{2} \\  \frac{1}{2} \\ 0 \\ 0
	\end{pmatrix},
\end{equation} 
This follows from 
\begin{equation}
	C
	\begin{pmatrix}
		w_0 \\  w_1 \\ w_2 \\ w_3
	\end{pmatrix}
	= 
	\begin{pmatrix}
		-\frac{1}{2} \\ \frac{1}{2} \\ 0 \\ 0
	\end{pmatrix}
	- C
	\begin{pmatrix}
		-\frac{1}{6} \\  -\frac{5}{6} \\ -1 \\ -1
	\end{pmatrix}.
\end{equation} 
Computing the right-hand side, we have
\begin{equation}
	\begin{pmatrix}
		-\frac{1}{2} \\  \frac{1}{2} \\ 0 \\ 0
	\end{pmatrix}
	-
	C
	\begin{pmatrix}
		-\frac{1}{6} \\  -\frac{5}{6} \\ -1 \\ -1
	\end{pmatrix}
	= 
	\begin{pmatrix}
		-\frac{1}{2} \\  \frac{1}{2} \\ 0 \\ 0
	\end{pmatrix}
	-
	\begin{pmatrix}
		-\frac{11}{6}    &       -\frac{1}{3}     &       \frac{1}{6}    &       -\frac{1}{3} \\
		3       &      -\frac{1}{2}       &    -1       &       \frac{3}{2}     \\
		-\frac{3}{2}    &        1        &      \frac{1}{2}     &      -3       \\
		\frac{1}{3}     &      -\frac{1}{6}       &     \frac{1}{3}      &     \frac{11}{6}  
	\end{pmatrix}
	\begin{pmatrix}
		-\frac{1}{6} \\  -\frac{5}{6} \\ -1 \\ -1
	\end{pmatrix}
	= 
	\begin{pmatrix}
		-\frac{5}{4} \\  \frac{13}{12} \\ -\frac{23}{12} \\ \frac{25}{12}
	\end{pmatrix}.
\end{equation} 
Thus, we obtain
\begin{equation}
	C
	\begin{pmatrix}
		w_0 \\  w_1 \\ w_2 \\ w_3
	\end{pmatrix}
	= 
	\begin{pmatrix}
		-\frac{5}{4} \\  \frac{13}{12} \\ -\frac{23}{12} \\ \frac{25}{12}
	\end{pmatrix}.
\end{equation} 
This result is equivalent to Eq. \eqref{w0w1w2w3}.

For scheme P2
\begin{equation}
	A=\begin{bmatrix}
		a_{00}&a_{01}&a_{02}&a_{03}&\cdots&0&0&0&0\\
		a_{10}&a_{11}&a_{12}&a_{13}&0&\cdots&0&0&0\\
		0&\frac{1}{6}&\frac{2}{3}&\frac{1}{6}&0&0&\cdots&0&0\\
		0&0&\frac{1}{6}&\frac{2}{3}&\frac{1}{6}&0&0&\cdots&0\\
		\vdots&\ddots&\ddots&\ddots&\ddots&\ddots&\ddots&\ddots&\vdots\\
		0&\cdots&0&0&\frac{1}{6}&\frac{2}{3}&\frac{1}{6}&0&0\\
		0&0&\cdots&0&0&\frac{1}{6}&\frac{2}{3}&\frac{1}{6}&0\\
		0&0&0&\cdots&0&a_{13}&a_{12}&a_{11}&a_{10}\\
		0&0&0&0&\cdots&a_{03}&a_{02}&a_{01}&a_{00}
	\end{bmatrix}
\end{equation}
and
\begin{equation}
	B=\begin{bmatrix}
		b_{00}&b_{01}&b_{02}&b_{03}&\cdots&0&0&0&0\\
		b_{10}&b_{11}&b_{12}&b_{13}&0&\cdots&0&0&0\\
		0&-\frac{1}{2}&0&\frac{1}{2}&0&0&\cdots&0&0\\
		0&0&-\frac{1}{2}&0&\frac{1}{2}&0&0&\cdots&0\\
		\vdots&\ddots&\ddots&\ddots&\ddots&\ddots&\ddots&\ddots&\vdots\\
		0&\cdots&0&0&-\frac{1}{2}&0&\frac{1}{2}&0&0\\
		0&0&\cdots&0&0&-\frac{1}{2}&0&\frac{1}{2}&0\\
		0&0&0&\cdots&0&-b_{13}&-b_{12}&-b_{11}&-b_{10}\\
		0&0&0&0&\cdots&-b_{03}&-b_{02}&-b_{01}&-b_{00}
	\end{bmatrix}.
\end{equation}
In addition, according to Eq. \eqref{wp_relation_w}, 
\begin{equation}
	\begin{pmatrix} 
		\vec{a}_0 & \vec{a}_1  
	\end{pmatrix} 
	\begin{pmatrix}
		w_0'\\w_1'
	\end{pmatrix}
	= \begin{pmatrix}
		w_0 \\  w_1-\frac{1}{6} \\ w_2-\frac{5}{6} \\ w_3- 1
	\end{pmatrix},
\end{equation}
\begin{equation}\label{bsh2-matrix-form-P2} 
	\begin{pmatrix} 
		\vec{b}_0 & \vec{b}_1
	\end{pmatrix} 
	\begin{pmatrix}
		w_0'\\ w_1'
	\end{pmatrix}
	= 
	\begin{pmatrix}
		-1 \\  \frac{1}{2} \\ \frac{1}{2} \\ 0
	\end{pmatrix}.
\end{equation} 
The other weights can be set $w_i'=1,i=2,\dots,N-2$ and $w_i=1,i=4,\dots,N-4$. Using Eq. \eqref{relation_Ai_Bi}, and noticing Eq. \eqref{coe-mat-relation}, one rewrites Eq. \eqref{bsh2-matrix-form-P2} as
\begin{equation}
	C
	\begin{pmatrix}
		w_0 \\  w_1-\frac{1}{6} \\ w_2-\frac{5}{6} \\ w_3- 1
	\end{pmatrix}
	= 
	\begin{pmatrix}
		-1 \\  \frac{1}{2} \\ \frac{1}{2} \\ 0
	\end{pmatrix}.
\end{equation} 
Thus, we obtain
\begin{equation}
	C
	\begin{pmatrix}
		w_0 \\  w_1 \\ w_2 \\ w_3
	\end{pmatrix}
	= 
	\begin{pmatrix}
		-\frac{5}{4} \\  \frac{13}{12} \\ -\frac{23}{12} \\ \frac{25}{12}
	\end{pmatrix}.
\end{equation} 
This result is equivalent to Eq. \eqref{w0w1w2w3}.

For scheme P3, the matrices $A$ and $B$ are of the form Eq. \eqref{matrix_A} and Eq. \eqref{matrix_B}. In addition, according to Eq. \eqref{wp_relation_w}, 
\begin{equation}\label{bsh1-matrix-form}
	\begin{pmatrix} 
		\vec{a}_0 & \vec{a}_1 & \vec{a}_2 
	\end{pmatrix} 
	\begin{pmatrix}
		w_0'\\
		w_1' \\
		w_2'
	\end{pmatrix}
	= \begin{pmatrix}
		w_0 \\  w_1 \\ w_2-\frac{1}{6} \\ w_3- \frac{5}{6}
	\end{pmatrix},
\end{equation}
\begin{equation}\label{bsh2-matrix-form}
	\begin{pmatrix} 
		\vec{b}_0 & \vec{b}_1 & \vec{b}_2 
	\end{pmatrix} 
	\begin{pmatrix}
		w_0'\\
		w_1' \\
		w_2'
	\end{pmatrix}
	= 
	\begin{pmatrix}
		-1 \\ 0 \\ \frac{1}{2} \\ \frac{1}{2}
	\end{pmatrix}.
\end{equation} 
Using Eq. \eqref{relation_Ai_Bi}, and noticing Eq. \eqref{coe-mat-relation}, one rewrites Eq. \eqref{bsh2-matrix-form} as
\begin{equation}\label{bsh1-matrix-form4}
	C 
	\begin{pmatrix}
		w_0 \\  w_1 \\ w_2-\frac{1}{6} \\ w_3- \frac{5}{6}
	\end{pmatrix}
	= \begin{pmatrix}
		-1 \\ 0 \\ \frac{1}{2} \\ \frac{1}{2}
	\end{pmatrix}.
\end{equation}
This result is equivalent to Eq. \eqref{w0w1w2w3}.

\end{proof}

\subsection{The parameter dependencies for schemes P1, P2, and P3}\label{app:dependency}
To optimize the parameters, we need more specific parameter dependencies. 
For scheme P1, we have the following relationship
\begin{equation}\label{P1_relationship}
	\begin{aligned}
		b_{02} &= 0,\\
		b_{03} &= 0,\\
		w_0' &= -\frac{\frac{4}{3}-8w_0}{8a_{00}},\\
		b_{00} &= -\frac{1}{2w_0'},\\
		a_{03} &= -a_{00} - \frac{5}{12}b_{00},\\
		a_{01} &= -5a_{03} - 4b_{00} - 8a_{00},\\
		a_{02} &= -8a_{03} - 2b_{00} - 5a_{00},
		\\
		b_{01} &= -b_{00}.
	\end{aligned}
\end{equation}
There are two free parameters in Eq. \eqref{P1_relationship}. 
In the optimization process, we set $a_{00} = 1$ and $w_0\in (0,10)$.

For scheme P2, we have the following relationship
\begin{equation}\label{P2_w0p_w1p}
	\begin{aligned}
		( b_{00} - \frac{9}{4}a_{03} + b_{03} ) w_0' + ( -\frac{3}{4}a_{11}-\frac{1}{2}b_{11} )w_1' &= \frac{5}{4} -\frac{9}{4}w_3, \\
		( 12a_{00} + 12a_{03} + 4b_{00}-5 b_{03} )w_0' + b_{11}w_1' &= \frac{1}{2},
	\end{aligned}
\end{equation}
\begin{equation}\label{P2_relationship}
	\begin{aligned}
		a_{13} &= \frac{w_3 - 1 - w_0' a_{03}}{w'_1},\\
		b_{13} &= -\frac{w_0'b_{03}}{w_1'},\\
		b_{01} & =12a_{00}+12a_{03}+4b_{00}-5b_{03},\\
		b_{02} & =4b_{03}-12a_{03}-5b_{00}-12a_{00},\\
		a_{01} & =2b_{03}-5a_{03}-4b_{00}-8a_{00},\\
		a_{02} & =4b_{03}-8a_{03}-2b_{00}-5a_{00},\\
		b_{10} & =\frac{9}{4}a_{13} - \frac{3}{4}a_{11} - \frac{1}{2}b_{11} - b_{13},\\
		b_{12} & =\frac{3}{4}a_{11} - \frac{9}{4}a_{13} - \frac{1}{2}b_{11},\\
		a_{10} & =\frac{1}{4}a_{11} - \frac{7}{4}a_{13} + \frac{1}{4}b_{11}+\frac{3}{4}b_{13},\\
		a_{12} & =\frac{1}{4}a_{11} - \frac{15}{4}a_{13} - \frac{1}{4}b_{11} + \frac{9}{4}b_{13}.
	\end{aligned}
\end{equation}
We choose $a_{03}, b_{03}\in(-10,10), w_0\in(0,10)$ as free parameters, and $a_{00} = a_{11} = 1, b_{00} = b_{11} = 0$ are fixed parameters. 

For scheme P3, we have the following relationship
\begin{equation}\label{P3_w1p_w2p}
	\begin{aligned}
		w'_1(\frac{1}{4}a_{11}+\frac{1}{4}b_{11}+\frac{57}{4}a_{13}-\frac{45}{4}b_{13})+w'_2(2b_{22}-5a_{22})&=w_0+16w_3-\frac{58}{3} - w'_0(a_{00}+16a_{03}-12b_{03}),\\
		w'_1(\frac{1}{4}a_{11}-\frac{15}{4}a_{13}-\frac{1}{4}b_{11}+\frac{9}{4}b_{13})+w'_2a_{22}&=w_2-\frac{1}{6}-w'_0(4b_{03}-8a_{03}-2b_{00}-5a_{00}),
	\end{aligned}
\end{equation}
\begin{equation}\label{P3_relationship}
	\begin{aligned}
		a_{23} &= \frac{w_3 - \frac{5}{6}-w_0'a_{03}-w_1'a_{13}}{w_2'},\\
		b_{23} &= \frac{\frac{1}{2}-w_0'b_{03}-w_1'b_{13}}{w_2'},\\
		b_{01} & =12a_{00}+12a_{03}+4b_{00}-5b_{03},\\
		b_{02} & =4b_{03}-12a_{03}-5b_{00}-12a_{00},\\
		a_{01} & =2b_{03}-5a_{03}-4b_{00}-8a_{00},\\
		a_{02} & =4b_{03}-8a_{03}-2b_{00}-5a_{00},\\
		b_{10} & =\frac{9}{4}a_{13} - \frac{3}{4}a_{11} - \frac{1}{2}b_{11} - b_{13},\\
		b_{12} & =\frac{3}{4}a_{11} - \frac{9}{4}a_{13} - \frac{1}{2}b_{11},\\
		a_{10} & =\frac{1}{4}a_{11} - \frac{7}{4}a_{13} + \frac{1}{4}b_{11}+\frac{3}{4}b_{13},\\
		a_{12} & =\frac{1}{4}a_{11} - \frac{15}{4}a_{13} - \frac{1}{4}b_{11} + \frac{9}{4}b_{13},\\
		b_{20} & =12a_{22} + 36a_{23} - 5b_{22} - 28b_{23},\\ 
		b_{21} & =4b_{22} - 36a_{23} - 12a_{22} + 27b_{23}, \\
		a_{20} & =2b_{22} - 16a_{23} - 5a_{22} + 12b_{23},\\
		a_{21} & =4b_{22} - 21a_{23} - 8a_{22} + 18b_{23}.
	\end{aligned}
\end{equation}
We choose $a_{03}, b_{03}, a_{13}, b_{13},  w_0'\in(-10,10), w_0\in(0,10)$ as free parameters, and $a_{00} = a_{11} = a_{22} = 1, b_{00} = b_{11} = b_{22} = 0$ are fixed parameters. 


\subsection{Algorithm for the construction of $\omega_f$}\label{app:algorithm}

Here, we present the algorithm for calculating the resolution of each scheme P1, P2 and P3 under given parameters, which is also the objective function of the optimization problem.

\begin{algorithm}
	\renewcommand{\algorithmicrequire}{\textbf{Input:}}
	\renewcommand{\algorithmicensure}{\textbf{Output:}}
	\caption{Construction of the objective function for the conservative compact scheme}
	\begin{algorithmic}[1]
		\Require{Given parameters $\sigma_0=0.003, \sigma_1=0.002, \sigma_2=0.001, N=101$. Scheme identifier $S \in \{P1, P2, P3\}$, For P1, given $I = 0$, $a_{00} = 1, w_0\in(0,10)$. For P2, given $I = 1$, $a_{03},b_{03}\in(-10,10),w_0\in(0,10),a_{00} = a_{11} = 1, b_{00} = b_{11} = 0$. For P3, given $I = 2$, $a_{03},b_{03},a_{13},b_{13},w_0'\in(-10,10),w_0\in(0,10),a_{00} = a_{11} = a_{22} = 1, b_{00} = b_{11} = b_{22} = 0$
		}
		\Ensure{Average resolution $-\omega_f$}
		\State Compute $w_1, w_2, w_3$ by Eq. \eqref{w0w1w2w3} 
		\If{$w_i \leq 0$ for any $i=1,2,3$}
		\State Return $0$ 
		\EndIf
		\State Compute scheme-specific parameters
		\If{$S = P1$}
		\State Compute $w_0',b_{00}, a_{03},a_{01},a_{02},b_{01}$ by Eq. \eqref{P1_relationship}
		\EndIf
		\If{$S = P2$}
		\If{Linear system Eq. \eqref{P2_w0p_w1p} is unsolvable}
		\State Return 0
		\EndIf
		\State Solve linear system Eq. \eqref{P2_w0p_w1p} for $w_0',w_1'$
		\State Compute $a_{13},b_{13},b_{01},b_{02},a_{01},a_{02},b_{10},b_{12},a_{10},a_{12}$ by Eq. \eqref{P2_relationship}   
		\EndIf 
		\If{$S = P3$}
		\If{Linear system Eq. \eqref{P3_w1p_w2p} is unsolvable}
		\State Return 0
		\EndIf
		\State Solve linear system Eq. \eqref{P3_w1p_w2p} for $w_1',w_2'$
		\State Compute $a_{23}, b_{23},b_{1},b_{02},a_{01},a_{02},b_{10},b_{12},a_{10},a_{12},b_{20},b_{21},a_{20},a_{21}$ by Eq. \eqref{P3_relationship}
		\EndIf 
		\State Construct matrices $A$ and $B$ by Eq. \eqref{matrix_A} and Eq. \eqref{matrix_B}
		\If{$A$ is singular}
		\State Return $0$ 
		\EndIf
		\State Compute $D = A^{-1}B$, $Q = D(2:N, 2:N)$
		\If{real part of largest eigenvalue of $Q > 0$}
		\State Return $0$ 
		\EndIf
		\State Initialize $\omega$-grid: $\omega_{\text{grid}} = [\delta, 2\delta, \ldots, \pi]$, $\delta = 0.01$
		\For{each $i = 0,1,\dots ,I$}
		\State Compute $\bar{\omega}(\omega_{\text{grid}})$ using parameters from $S$ 
		\State Compute $\varepsilon_R(\omega_{\text{grid}}) = \left|\frac{\text{Re}[\bar{\omega}(\omega_{\text{grid}})] - \omega_{\text{grid}}}{\omega_{\text{grid}}}\right|$
		\State Compute $\varepsilon_I(\omega_{\text{grid}}) = \left|\frac{\text{Im}[\bar{\omega}(\omega_{\text{grid}})]}{\omega_{\text{grid}}}\right|$
		\State Find $\omega_{R}^{\sigma_i} = \min\{\omega \in \omega_{\text{grid}} \mid \varepsilon_R(\omega) \geq \sigma_i\}$ 
		and  $\omega_{I}^{\sigma_i} = \min\{\omega \in \omega_{\text{grid}} \mid \varepsilon_I(\omega) \geq \sigma_i\}$
		\If{$\omega_{R}^{\sigma_i}$ or $\omega_{I}^{\sigma_i}$ not found}
		\State \quad Return $0$ 
		\EndIf
		\State Compute $\omega_i^{\sigma_i} = \frac{1}{2}(\omega_{R}^{\sigma_i} + \omega_{I}^{\sigma_i})$
		\EndFor
		\State Compute $\omega_f = \frac{1}{I+1}\sum_{i=0}^I \omega_i^{\sigma_i}$
		\State Return $-\omega_f$ 
	\end{algorithmic}
	\label{alg:Conservative_compact_scheme_objective_function_construction}
\end{algorithm}


\begin{thebibliography}{99}
	


\bibitem{LiMing1995compact}
M. Li, T. Tang, B. Fornberg,
\newblock A compact fourth-order finite difference scheme for the steady incompressible Navier-Stokes equations, International Journal for Numerical Methods in Fluids, 20 (1995), 1137--1151.

\bibitem{Gamet1999compact}
L. Gamet, F. Ducros, F. Nicoud, T. Poinsot,
\newblock Compact finite difference schemes on non-uniform meshes. Application to direct numerical simulations of compressible flows, International Journal for Numerical Methods in Fluids, 29 (1999), 159--191.

\bibitem{Shah2010upwind}
A. Shah, L. Yuan, A. Khan, 
\newblock Upwind compact finite difference scheme for time-accurate solution of the incompressible Navier--Stokes equations, Applied Mathematics and Computation, 215 (2010), 3201--3213.

\bibitem{TianZhenfu2011higher}
Z. Tian, X. Liang, P. Yu,
\newblock A higher order compact finite difference algorithm for solving the incompressible Navier--Stokes equations, International Journal for Numerical Methods in Engineering, 88 (2011), 511--532.

\bibitem{ZhengFeng2021high}
F. Zheng, C.-W. Shu, J. Qiu
\newblock A high order conservative finite difference scheme for compressible two-medium flows, Journal of Computational Physics, 445 (2021), 110597.

\bibitem{FengQiwei2021sixth}
Q. Feng, B. Han, P. Minev,
\newblock Sixth order compact finite difference schemes for Poisson interface problems with singular sources, Computers \& Mathematics with Applications, 99 (2021), 2--25.

\bibitem{FengQiwei2022high}
Q. Feng, B. Han, P. Minev, 
\newblock A high order compact finite difference scheme for elliptic interface problems with discontinuous and high-contrast coefficients, Applied Mathematics and Computation, 431 (2022), 127314.

\bibitem{Tangman2008numerical}
D. Y. Tangman, A. Gopaul, M. Bhuruth,
\newblock Numerical pricing of options using high-order compact finite difference schemes,
Journal of Computational and Applied Mathematics, 218 (2008), 270--280.

\bibitem{Roul2021compact}
P. Roul, V. P. Goura,
\newblock A compact finite difference scheme for fractional Black-Scholes option pricing model, Applied Numerical Mathematics, 166 (2021), 40--60.

\bibitem{CuiMingrong2009compact}
M. Cui,
\newblock Compact finite difference method for the fractional diffusion equation, Journal of Computational Physics, 228 (2009), 7792--7804.

\bibitem{LELE199216}
S. K. Lele,  
\newblock Compact finite difference schemes with spectral-like resolution, Journal of Computational Physics, 103 (1992), 16--42.

\bibitem{kim1997implementation}
J. W. Kim, D. J. Lee, 
\newblock Implementation of boundary conditions for optimized high-order compact schemes, Journal of Computational Acoustics, 5 (1997), 177--191.

\bibitem{wu2024towards}
L. Wu, J. W. Kim, 
\newblock Towards a genuinely stable boundary closure for pentadiagonal compact finite difference schemes, Journal of Computational Physics, (2024), 112887.

\bibitem{qin2021role}
J. Qin, Y. Chen, X. Deng, 
\newblock On the role of global conservation property for finite difference schemes, Journal of Computational Physics, 440 (2021), 110437.

\bibitem{kim2007optimised}
J. W. Kim, 
\newblock Optimised boundary compact finite difference schemes for computational aeroacoustics, Journal of Computational Physics, 225 (2007), 995--1019.

\bibitem{brady2019high}
P. T. Brady, D. Livescu,
\newblock High-order, stable, and conservative boundary schemes for central and compact finite differences, Computers \& Fluids, 183 (2019), 84--101.

\bibitem{gustafsson1975convergence}
B. Gustafsson, 
\newblock The convergence rate for difference approximations to mixed initial boundary value problems, Mathematics of Computation, 29 (1975), 396--406.

\bibitem{strang2000linear}
G. Strang,  
\newblock {Linear algebra and its applications, 2000}.

\bibitem{storn1997differential}
R. Storn, K. Price,
\newblock Differential evolution--a simple and efficient heuristic for global optimization over continuous spaces, Journal of global optimization, 11 (1997), 341--359.
	
	
\end{thebibliography}
\end{document}